\def\Rset{\mathbb{R}}
\def\Cset{\mathbb{C}}
\def\Nset{\mathbb{N}}
\def\Kset{\mathbb{K}}
\theoremstyle{plain}
\newtheorem{thm}{Theorem}[section]
\newtheorem{lem}[thm]{Lemma}
\newtheorem{cor}[thm]{Corollary}
\theoremstyle{definition}
\newtheorem{defn}[thm]{Definition}
\newtheorem{rem}[thm]{Remark}
\newtheorem{exmp}[thm]{Example}
\newtheorem*{correctness21}{\bf Proof of the correctness of Definition~2.1}
\newtheorem*{correctness24}{\bf Proof of the correctness of Definition~2.4}
\numberwithin{equation}{section}
\journal{}
\begin{document}

\begin{frontmatter}
\title{On a family of Weierstrass-type root-finding methods with accelerated convergence}
\author{Petko D. Proinov}\ead{proinov@uni-plovdiv.bg}
\author{Maria T. Vasileva}\ead{mariavasileva@uni-plovdiv.bg}
\address{Faculty of Mathematics and Informatics, University of Plovdiv, Plovdiv 4000, Bulgaria}

\begin{abstract}
Kyurkchiev and Andreev (1985) constructed an infinite sequence of Weierstrass-type iterative methods for approximating all zeros of a polynomial simultaneously. The first member of this sequence of iterative methods is the famous method of Weierstrass (1891) 
and the second one is the method of Nourein (1977). For a given integer $N \ge 1$, the $N$th method of this family has the order of convergence ${N+1}$. Currently in the literature, there are only local convergence results for these methods.
The main purpose of this paper is to present semilocal convergence results for the Weierstrass-type methods under computationally verifiable initial conditions and with computationally verifiable a posteriori error estimates.
\end{abstract}

\begin{keyword}
simultaneous methods \sep Weierstrass method \sep accelerated convergence
\sep local convergence \sep semilocal convergence \sep error estimates
\MSC 65H04 \sep 12Y05
\end{keyword}

\end{frontmatter}

%\tableofcontents

%%%%%%%%%%%%%%%%%%%%%%%%%%%%%%%%%%%%%%%%%%%%%%%%%%
%%
%%        Introduction and preliminaries
%%
%%%%%%%%%%%%%%%%%%%%%%%%%%%%%%%%%%%%%%%%%%%%%%%%%%

%Section 1
\section{Introduction and preliminaries}
\label{sec:introduction}

Throughout this paper ${(\Kset,|\cdot|)}$ denotes an algebraically closed normed field and
$\Kset[z]$ denotes the ring of polynomials (in one variable) over $\Kset$. 
We endow the vector space $\Kset^n$ with the $p$-norm
${\|x\|_p = \left( \sum _{i = 1} ^n |x_i|^p \right) ^{1/p}}$ for some ${1 \le p \le \infty}$,
and we equip ${(\Rset^n,\|\cdot\|_p)}$ with coordinate-wise ordering ${\preceq}$ defined by
\begin{equation} \label{eq:coordinate-wise-ordering}
x \preceq y  \quad\text{if and only if}\quad x_i \le y_i \,\, \text{ for each } \,\, i = 1, \ldots, n.
\end{equation}
Then ${(\Rset^n,\|\cdot\|_p)}$ is a solid vector space.
Also we define a cone norm ${\|\cdot\|}$ in ${\Kset^n}$ with values in ${\Rset^n}$ by 
\begin{equation} \label{eq:cone-norm}
{\|x\| = (|x_1|,\ldots,|x_n|)}.
\end{equation}
Then ${(\Kset^n,\|\cdot\|)}$ is a cone normed space over $\Rset^n$ (see, e.g., Proinov \cite{Pro13}).

Let ${f \in \Kset[z]}$ be a polynomial of degree ${n \ge 2}$.
A vector ${\xi \in \Kset^n}$ is called a root-vector of $f$ if 
\(
{f(z) = a_0 \prod _{i = 1}^ n (z - \xi_i)}
\)
for all ${z \in \Kset}$, where ${a_0 \in \Kset}$.

In 1891, Weierstrass \cite{Wei91} published his famous iterative method for simultaneous computation of all zeros of $f$.
The \emph{Weierstrass method} is defined by the following iteration
\begin{equation} \label{eq:Weierstrass-iteration}
x^{(k + 1)}  = x^{(k)} - W_f(x^{(k)}), \qquad k = 0, 1, 2,\ldots,
\end{equation}
where the operator ${W_f \colon \mathcal{D} \subset \Kset^n \to \Kset^n}$ is defined by
\begin{equation} \label{eq:Weierstrass-correction}
W_f(x) = (W_1(x),\ldots,W_1(x)) \quad\text{with}\quad
W_i(x) = \frac{f(x_i)}{ a_0 \prod_{j \ne i} {(x_i  - x_j )}} \quad (i = 1,\ldots,n),
\end{equation}
where ${a_0 \in \Kset}$ is the leading coefficient of $f$ and the domain $\mathcal{D}$ of $W_f$ is the set of all vectors in $\Kset^n$ with distinct components.
The Weierstrass method \eqref{eq:Weierstrass-iteration} has second-order of convergence provided that all zeros of $f$ are simple.
Other iterative methods for simultaneous finding polynomial zeros can be found in the books \cite{Kyu98,McN07,Pet08,SAK94} and the references therein.

In 1985, Kyurkchiev and Andreev \cite{KA85} introduced a sequence of iterative methods for approximating all zeros of a polynomial simultaneously. The first member of their family of iterative methods is the Weierstrass method \eqref{eq:Weierstrass-iteration} 
and the second one is the method of Nourein \cite{Nou77}. 

Before we present Kyurkchiev and Andreev's family of iterative methods, 
we give some notations which will be used throughout the paper.
We define the binary relation $\#$ on $\Kset^n$ by
\begin{equation} \label{eq:special-binary-relation}
	x \, \# \, y  \quad\Leftrightarrow\quad  x_i \ne y_j \, \text{ for all } \,  i,j \in I_n \, \text{ with } \,  i \ne j.
\end{equation}
Here and throughout this paper, we denote by $I_n$ the set of indices ${1, \ldots, n}$. 
For two vectors ${x \in \Kset^n}$ and ${y \in \Rset^n}$ we define in ${\Rset^n}$ the vector
\[
\frac{x}{y} = \left( \frac{|x_1|}{y_1},\cdots,\frac{|x_n|}{y_n} \right)
\]
provided that $y$ has only nonzero components. 
We define the function ${d \colon \Kset^n \to \Rset^n}$ by %%%%${d(x) = (d_1(x),\ldots,d_n(x))}$ with
\[
d(x) = (d_1(x),\ldots,d_n(x)) \quad\text{with}\quad d_i(x) = \min_{j \ne i} |x_i - x_j| \quad (i = 1,\ldots,n).
\]
In the sequel, for a given vector $x$ in $\Kset^n$, ${x_i}$ always denotes the $i$th component of $x$. 
In particular, if $F$ is a map with values in $\Kset^n$, then ${F_i(x)}$ denotes the $i$th component of the vector $F(x)$.

%Definition 1.1
\begin{defn} \label{def:iteration-function-Weierstrass-type}
Suppose ${f \in \Kset[z]}$ is a polynomial of degree ${n \ge 2}$.
We define the sequence ${(T^{(N)})_{N = 0}^\infty}$ of functions ${T^{(N)} \colon D_N \subset \Kset^n \to \Kset^n}$
recursively by setting ${T^{(0)}(x) = x}$ and
\begin{equation} \label{eq:Weierstrass-high-order-iteration-function}
T_i^{(N + 1)} (x) = x_i -  \frac{f(x_i)}{\displaystyle \prod_{j \ne i} (x_i - T_j^{(N)}(x))}
\quad (i=1,\ldots,n),
\end{equation}
where the sequence of the domains ${D_N}$ is also defined recursively by setting ${D_0 = \Kset^n}$ and
\begin{equation} \label{eq:domain-Weierstrass-high-order}
    D_{N + 1} =  \{ x \in D_N \colon  x \, \# \,T^{(N)}(x) \}.
\end{equation}
\end{defn}

Let ${N \in \Nset}$ be fixed. Then the $N$th method of Kyurkchiev-Andreev's family can be defined by the following fixed point iteration
\begin{equation} \label{eq:Weierstrass-high-order}
x^{(k + 1)} = T^{(N)} (x^{(k)}), \qquad k = 0,1,2,\ldots.
\end{equation}

Currently in the literature, there are only local convergence results for the Weierstrass-type methods 
\eqref{eq:Weierstrass-high-order} (see \cite{KA85,PV15}).
In this paper, we present semilocal convergence results for the Weierstrass-type methods under computationally verifiable initial conditions and with computationally verifiable a posteriori error estimates. 
These results are obtained by using some results of \cite{Pro15a} and \cite{Pro15b}.

The paper is structured as follows: In Section~\ref{sec:Local-convergence-theorem-of-the second type}, we obtain new local convergence results (Theorem~\ref{thm:second-local-Weierstrass}, Corollary~\ref{cor:second-local-Weierstrass-type} and 
Corollary~\ref{cor:second-local-Weierstrass}) for the Weierstrass-type methods \eqref{eq:Weierstrass-high-order}.
In the case ${N = 1}$ (Weierstrass method) and ${p =\infty}$ the main result of this section reduces 
to a result of Proinov \cite[Theorem~7.3]{Pro15a}. 
In Section~\ref{sec:semilocal-theorem-Weierstrass}, we present our semilocal convergence results 
(Theorem~\ref{thm:semilocal-theorem-Weierstrass-N}, Theorem~\ref{thm:semilocal-theorem-Weierstrass},
Corollary~\ref{cor:semilocal-theorem-Weierstrass-first} and Corollary~\ref{cor:semilocal-theorem-Weierstrass-second}) 
for the Weierstrass-type methods 
\eqref{eq:Weierstrass-high-order}.
Note that these results are based on the local convergence results obtained in the previous section. 
In Section~\ref{sec:numerical-examples-Weierstrass}, we provide three numerical examples to show the applicability of our semilocal convergence results.

Throughout this paper, we follow the terminology of \cite{Pro15a}.
In particular we refer to this paper for the definition of the following notions:
quasi-homogeneous function of degree $r \ge 0$; gauge function of order ${r \ge 1}$;
function of initial conditions of a map; initial point of a map; iterated contraction at a point.

%%%%%%%%%%%%%%%%%%%%%%%%%%%%%%%%%%%%%%%%%%%%%%%%%%%%%%%%%%%%%%%%%%%%%%
%%
%%    Local convergence analysis of the Weierstrass-type methods
%%
%%%%%%%%%%%%%%%%%%%%%%%%%%%%%%%%%%%%%%%%%%%%%%%%%%%%%%%%%%%%%%%%%%%%%%

%Section 2
\section{Local convergence analysis of the Weierstrass-type methods}
\label{sec:Local-convergence-theorem-of-the second type}

Let ${f \in \Kset[z]}$ be a polynomial of degree $n \ge 2$.
In this section, we study the convergence of the Weierstrass-type methods \eqref{eq:Weierstrass-high-order} with respect to the  function of initial conditions
${E \colon \mathcal{D} \to \Rset_+}$  defined by
\begin{equation} \label{eq:FIC-second-kind}
E(x) = \left \|\frac{x - \xi}{d(x)} \right \|_p
\end{equation}
for some ${1 \le p \le \infty}$.
We define the function ${\Psi \colon \Rset_+ \to \Rset_+}$ by
\begin{equation} \label{eq:Psi-Weierstrass}
\Psi(t) = (1 + 2 t) \left( 1 + \frac{t}{(n - 1)^p} \right)^{n - 1}.
\end{equation}
Throughout this section we denote by $R$ the unique positive solution of the equation ${\Psi(t) = 2}$. It can be proved that
\begin{equation} \label{eq:R-second-local-inequality}
\frac{n (2^{1/n} - 1)}{(n - 1)^{1/q } + 2} < R < \frac{1}{2} \, .
\end{equation}
where $q$ is the conjugate exponent of $p$, i.e. $q$ is defined by means of ${1 \le q \le \infty}$ and ${1/p + 1/q = 1}$.
The lower estimate in \eqref{eq:R-second-local-inequality} can be proved as
Lemma~7.4 of \cite{Pro15a} and the upper estimate is trivial.
Also, we define the functions ${\omega \colon \Rset_+ \to \Rset_+}$ and ${\phi \colon [0,R] \to [0,1]}$ by
\begin{equation} \label{eq:omega-phi-Weierstrass}
\omega(t) = \left( 1 + \frac{t}{(n - 1)^p} \right)^{n - 1}
\quad\text{and}\quad
\phi(t) = \frac{\omega(t) - 1}{1 - 2 t \omega(t)} \, .
\end{equation}
It follows from the definition of $R$ that
\begin{equation} \label{eq:omega-R}
\omega(R) = 2 / (1 + 2 R), \quad  \omega(R) < 2, \quad \omega(R) < 1 / (2 R) \quad\text{and}\quad \phi(R) =1.
\end{equation}

%Definition 2.1
\begin{defn} \label{def:phi-N-Weierstrass-second}
We define the sequence $(\phi_N)_{N=0}^\infty$ of nondecreasing functions ${\phi_N \colon [0,R] \to [0,1]}$ recursively by setting
${\phi_{0}(t) = 1}$ and
\begin{equation} \label{eq:phi-N-Weierstrass-second}
 \phi_{N + 1}(t) = \frac{\omega_N(t) - 1}{1 - 2 t \omega_N(t)}, \quad\text{where}\quad
 \omega_N(t) = \left( 1 + \frac{t \phi_N(t)}{(n - 1)^p} \right)^{n - 1} \, .
\end{equation}
\end{defn}

\begin{correctness21}
We prove the correctness of the definition by induction.
For $N = 0$ it is obvious.
Assume that for some ${N \ge 0}$ the function $\phi_N$ is well-defined and nondecreasing on ${[0,R]}$ and ${\phi_N(R) = 1}$.
We shall prove the same for $\phi_{N + 1}$.
From the induction hypothesis, we get that $\omega_N$ is nondecreasing on ${[0,R]}$ and ${\omega_N(R) = \omega(R)}$.
Then it follows from \eqref{eq:omega-R} that
 \begin{equation} \label{eq:denominator-of-phi-N-positive-Weierstrass}
1 - 2 t \omega_N(t) \ge 1 - 2 R \omega_N(R) = 1 - 2 R \omega(R) > 0
\end{equation}
which guarantees that the function ${\phi_{N + 1}}$ is well-defined on ${[0, R]}$.
Obviously, ${\phi_{N + 1}}$ is nondecreasing on ${[0, R]}$.
From the definition of $\phi_{N + 1}$, ${\omega_N(R) = \omega(R)}$  and \eqref{eq:omega-R}, we get
\[
 \phi_{N + 1}(R) = \frac{\omega_N(R) - 1}{1 - 2 R \omega_N(R)} = \frac{\omega(R) - 1}{1 - 2 R \omega(R)}= \phi(R) = 1.
\]
This completes the induction and the proof of the correctness of Definition~\ref{def:phi-N-Weierstrass-second}.
\qed
\end{correctness21}

%Definition 2.2
\begin{defn} \label{def:varphi-N-Weierstrass-second}
Given ${N \ge 0}$, we define the function ${\varphi_N \colon [0,R] \to [0,R]}$ by
 \begin{equation} \label{eq:varphi-N-Weierstrass}
 \varphi_N(t) = t \phi_{N}(t).
\end{equation}
\end{defn}

%Lemma 2.3
\begin{lem} \label{lem:phi-N-properties-Weierstrass-second}
Let ${N \ge 0}$. Then:
\begin{enumerate}[(i)]
  \item $\phi_N$ is a quasi-homogeneous of degree ${N}$ on ${[0,R]}$;
  \item $\phi_{N + 1}(t) \le \phi(t) \phi_N(t)$ for every ${t \in [0,R]}$;
  \item ${\phi_N(t) \le \phi(t)^N}$ for every ${t \in [0,R]}$;
  \item $\varphi_N$ is a gauge function of order ${N + 1}$ on ${[0, R]}$.
\end{enumerate}
\end{lem}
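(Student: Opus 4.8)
The plan is to prove the four assertions in the order (i), (ii), (iii), (iv); part (iii) will be a one‑line consequence of (ii), and part (iv) will follow from (i) together with the bound $\phi_N(t)\le 1$ on $[0,R]$ that is already contained in the proof of the correctness of Definition~\ref{def:phi-N-Weierstrass-second}. The one computational ingredient I will use repeatedly is the factorization coming from the binomial formula: writing $a = 1/(n-1)^p$, one has for every $u \ge 0$
\[
(1 + a u)^{n-1} - 1 = a u \, P(u), \qquad P(u) = \sum_{k=1}^{n-1}\binom{n-1}{k} a^{k-1} u^{k-1},
\]
where $P$ is a polynomial with nonnegative coefficients, hence nonnegative and nondecreasing on $\Rset_+$. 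With $u = \varphi_N(t) = t\phi_N(t)$ this reads $\omega_N(t) - 1 = a\varphi_N(t)P(\varphi_N(t))$, and with $u = t$ it reads $\omega(t) - 1 = a t P(t)$.

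For (i) I induct on $N$. The case $N = 0$ is clear since $\phi_0 \equiv 1$ is quasi-homogeneous of degree $0$. Assume $\phi_N$ is quasi-homogeneous of degree $N$ on $[0,R]$, and recall from the proof of the correctness of Definition~\ref{def:phi-N-Weierstrass-second} that $\phi_N$ is also nondecreasing on $[0,R]$, that $\omega_N$ is nondecreasing with $\omega_N(R) = \omega(R)$, and that $1 - 2t\omega_N(t) \ge 1 - 2R\omega(R) > 0$. By Definition~\ref{def:varphi-N-Weierstrass-second}, $\varphi_N(t) = t\phi_N(t)$ is the product of the identity map (quasi-homogeneous of degree $1$) with $\phi_N$, so $\varphi_N$ is quasi-homogeneous of degree $N+1$, nonnegative and nondecreasing on $[0,R]$. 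Substituting the factorization into \eqref{eq:phi-N-Weierstrass-second} gives
\[
\phi_{N+1}(t) = \frac{\omega_N(t) - 1}{1 - 2t\omega_N(t)} = a\,\varphi_N(t)\,q_N(t), \qquad q_N(t) := \frac{P(\varphi_N(t))}{1 - 2t\omega_N(t)} .
\]
Here $q_N$ is nonnegative and nondecreasing on $[0,R]$: indeed $P\circ\varphi_N$ is nondecreasing, $t\mapsto 2t\omega_N(t)$ is a product of nonnegative nondecreasing functions, and $1 - 2t\omega_N(t) \ge 1 - 2R\omega(R) > 0$; thus $q_N$ is quasi-homogeneous of degree $0$. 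Since the product of quasi-homogeneous functions of degrees $r$ and $s$ is quasi-homogeneous of degree $r+s$, we conclude that $\phi_{N+1} = a\,\varphi_N\,q_N$ is quasi-homogeneous of degree $(N+1) + 0 = N+1$, which closes the induction.

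For (ii) fix $t \in [0,R]$. Since $0 \le \phi_N(t) \le 1$ we have $\varphi_N(t) \le t$, whence $\omega_N(t) \le \omega(t)$ and $P(\varphi_N(t)) \le P(t)$ by monotonicity of $P$; consequently $0 < 1 - 2t\omega(t) \le 1 - 2t\omega_N(t)$, positivity following exactly as in the proof of the correctness of Definition~\ref{def:phi-N-Weierstrass-second}. Using both factorizations together with $\phi_N(t) \ge 0$,
\[
\phi_{N+1}(t) = \frac{a\,t\,\phi_N(t)\,P(\varphi_N(t))}{1 - 2t\omega_N(t)} \le \phi_N(t)\cdot\frac{a\,t\,P(t)}{1 - 2t\omega(t)} = \phi_N(t)\,\phi(t),
\]
which is (ii). Then (iii) follows by induction: it holds for $N = 0$, and $\phi_{N+1}(t) \le \phi(t)\phi_N(t) \le \phi(t)\cdot\phi(t)^N = \phi(t)^{N+1}$ by (ii) and the induction hypothesis (all factors being nonnegative). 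Finally, for (iv): $\varphi_N$ maps $[0,R]$ into $[0,R]$ since $0 \le \varphi_N(t) \le t \le R$; by (i) and the product rule $\varphi_N = t\phi_N$ is quasi-homogeneous of degree $N+1$; and $\varphi_N(t) \le t$ on $[0,R]$ because $\phi_N(t) \le 1$. Hence $\varphi_N$ is a gauge function of order $N+1$ on $[0,R]$.

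I expect the crux to be part (i): the point is to extract from $\phi_{N+1}$ exactly one extra factor $\varphi_N$ (which carries degree $N+1$) so that the remaining factor $q_N$ is visibly quasi-homogeneous of degree $0$, i.e.\ nonnegative and nondecreasing. The binomial factorization $\omega_N(t) - 1 = a\varphi_N(t)P(\varphi_N(t))$ is what produces this clean splitting, and the strict positivity of $1 - 2t\omega_N(t)$ on $[0,R]$ — already guaranteed in the proof of the correctness of Definition~\ref{def:phi-N-Weierstrass-second} — is what keeps $q_N$ well defined and monotone. Parts (ii)–(iv) are then routine given this structure.
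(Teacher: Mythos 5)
Your proof is correct and follows essentially the same route as the paper: induction for (i) via the product rule for quasi-homogeneous functions, the key inequality $\omega_N(t)-1\le\phi_N(t)(\omega(t)-1)$ for (ii), induction for (iii), and (i) plus $\phi_N(t)\le 1$ for (iv). The only difference is cosmetic: where the paper cites Example~2.2 of \cite{Pro15a} to assert that $\omega_N(t)-1$ is quasi-homogeneous of degree $N+1$, you open that black box with the explicit binomial factorization $\omega_N(t)-1=a\,\varphi_N(t)P(\varphi_N(t))$, which makes the argument self-contained but is not a different idea.
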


\begin{proof}
We prove Claim (i) by induction. The case ${N = 0}$ is obvious.
From the induction hypothesis and Example 2.2 of \cite{Pro15a}, we conclude that the function ${\omega_N(t) - 1}$ is a quasi-homogeneous of degree ${N + 1}$ on ${[0,R]}$. Hence, $\phi_{N + 1}$ is quasi-homogeneous of degree ${N + 1}$ on ${[0, R]}$ as a product of a quasi-homogeneous of degree ${N + 1}$ and a nondecreasing function. This ends the proof of (i).
From the fact that the function ${\omega_N(t) - 1}$ is a quasi-homogeneous of degree ${N + 1}$ on ${[0,R]}$, we obtain
\[
\omega_N(t) - 1 \le \phi_N(t) (\omega(t) - 1).
\]
From \eqref{eq:phi-N-Weierstrass-second}, the last inequality and ${\omega_N(t) \le \omega(t)}$, we get
\[
\phi_{N + 1}(t) = \frac{\omega_N(t) - 1}{1 - 2 t \omega_N(t)} \le \phi_{N}(t) \frac{\omega(t) - 1}{1 - 2 t \omega(t)} =
\phi_{N}(t) \phi(t)
\]
which proves (ii). Claim (iii) follows from (ii) by induction.
Claim (iv) follows from (i) and definition of $\varphi_N$.
\end{proof}

%Definition 2.4
\begin{defn} \label{def:beta-psi-N-Weierstrass}
For a given integer ${N \ge 1}$, we define the increasing function ${\beta_N \colon [0,R] \to [0,1)}$ by
\begin{equation}\label{eq:beta-N-Weierstrass}
   \beta_{N} (t) = \omega_{N - 1} (t) - 1
\end{equation}
and we define the decreasing function ${\psi_N \colon [0,R] \to (0,1]}$ by
\begin{equation}\label{eq:psi-N-Weierstrass}
   \psi_{N} (t) = 1 - 2 t \omega_{N - 1} (t),
\end{equation}
where the function $\omega_N$ is defined in \eqref{eq:phi-N-Weierstrass-second}.
\end{defn}

\begin{correctness24}
The functions $\beta_N$ and $\psi_N$ are well defined on ${[0,R]}$ since $\omega_{N-1}$ is well defined on this interval.
The monotonicity of these functions is obvious. It remains to prove that ${\beta_N(R) < 1}$ and ${\psi_N(R) > 0}$.
It follows from ${\omega_{N-1}(R) = \omega(R)}$ and \eqref{eq:omega-R} that
\[
\beta_N(R)= \omega(R) -1 < 1 \quad\text{and}\quad \psi_N(R)= 1 - 2 R \omega(R) > 0
\]
which completes the proof of the correctness of Definition~\ref{def:phi-N-Weierstrass-second}.
\qed
\end{correctness24}

%Lemma 2.5
\begin{lem} \label{lem:beta-psi-N-properties-Weierstrass}
Let ${N \ge 1}$. Then
\begin{enumerate}[(i)]
  \item $\beta_N$ is a quasi-homogeneous of degree ${N}$ on ${[0,R]}$;
  \item ${\beta_N(t) = \phi_N(t) \psi_N(t)}$ for every ${t \in [0,R]}$.
\end{enumerate}
\end{lem}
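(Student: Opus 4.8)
The plan is to derive both claims directly from results already in hand, so that almost no new computation is needed. For Claim (i), recall from Definition~\ref{def:beta-psi-N-Weierstrass} that $\beta_N(t) = \omega_{N-1}(t) - 1$. But this is exactly the function that appeared in the proof of Lemma~\ref{lem:phi-N-properties-Weierstrass-second}(i): there, using the induction hypothesis that $\phi_{N-1}$ is quasi-homogeneous of degree $N-1$ together with Example~2.2 of \cite{Pro15a}, it was established that $\omega_{N-1}(t) - 1$ is quasi-homogeneous of degree $N$ on $[0,R]$. Hence Claim (i) is immediate — it is a restatement of an intermediate fact already proved. I would simply cite that step (and Lemma~\ref{lem:phi-N-properties-Weierstrass-second}(i) / Example~2.2 of \cite{Pro15a}) rather than repeat the induction.

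For Claim (ii), I would combine the definitions of $\phi_N$, $\psi_N$ and $\beta_N$. By \eqref{eq:phi-N-Weierstrass-second} we have, for $N \ge 1$,
\[
\phi_N(t) = \frac{\omega_{N-1}(t) - 1}{1 - 2 t\,\omega_{N-1}(t)},
\]
and the denominator here is precisely $\psi_N(t) = 1 - 2 t\,\omega_{N-1}(t)$ from \eqref{eq:psi-N-Weierstrass}, which by the correctness of Definition~\ref{def:beta-psi-N-Weierstrass} is strictly positive on $[0,R]$. Therefore
\[
\phi_N(t)\,\psi_N(t) = \omega_{N-1}(t) - 1 = \beta_N(t),
\]
which is exactly Claim (ii). The only point requiring care is that the division is legitimate, i.e. that $\psi_N(t) \neq 0$ on all of $[0,R]$; this is guaranteed by \eqref{eq:denominator-of-phi-N-positive-Weierstrass} (equivalently, by the proof of correctness of Definition~\ref{def:beta-psi-N-Weierstrass}), so no genuine obstacle arises.

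In short, there is no hard step here: Claim (i) is already contained in the proof of Lemma~\ref{lem:phi-N-properties-Weierstrass-second}, and Claim (ii) is a one-line algebraic identity among the defining formulas, valid because the common denominator $\psi_N$ is positive on $[0,R]$. The mild subtlety worth flagging is purely bookkeeping — aligning the index shift $N \leftrightarrow N-1$ between Definition~\ref{def:phi-N-Weierstrass-second} and Definition~\ref{def:beta-psi-N-Weierstrass} — but once that is noted the result follows directly.
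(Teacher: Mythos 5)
Your proposal is correct and follows essentially the same route as the paper: Claim (i) is obtained from Lemma~\ref{lem:phi-N-properties-Weierstrass-second}(i) together with Example~2.2 of \cite{Pro15a} (i.e., the already-established fact that $\omega_{N-1}(t)-1$ is quasi-homogeneous of degree $N$), and Claim (ii) is the immediate algebraic identity $\phi_N(t)\,\psi_N(t)=\omega_{N-1}(t)-1=\beta_N(t)$ coming from Definitions~\ref{def:phi-N-Weierstrass-second} and~\ref{def:beta-psi-N-Weierstrass}. Your added remarks on the positivity of $\psi_N$ on $[0,R]$ and on the index shift are correct and merely make explicit what the paper leaves implicit.
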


\begin{proof}
Claim (i) follows from Lemma~\ref{lem:phi-N-properties-Weierstrass-second}(i) and Example 2.2 of \cite{Pro15a}.
Claim (ii) follows from Definition~\ref{def:phi-N-Weierstrass-second} and
Definition~\ref{def:beta-psi-N-Weierstrass}.
\end{proof}

%Lemma 2.6
\begin{lem} [\cite{PV15}] \label{lem:Weierstrass-high-order-map}
Let ${f \in \Kset[z]}$ be a polynomial of degree ${n \ge 2}$. Assume that ${\xi \in \Kset^n}$ is a root-vector of $f$ and ${N \ge 1}$.
If ${x \in D_{N + 1}}$, then for every ${i \in I_n}$
\begin{equation} \label{eq:Weierstrass-high-order-map}
T^{(N + 1)}_i(x) - \xi_i = \left(1 - \prod_{j \ne i} (1 + u_j)  \right) (x_i - \xi_i),
\end{equation}
where ${u_j \in \Kset}$ is defined by
\begin{equation} \label{eq:u-definition}
u_j = \frac{T_j ^{(N)}(x) - \xi_j}{x_i - T_j ^{(N)}(x)} \, .
\end{equation}
\end{lem}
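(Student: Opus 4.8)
The plan is to derive \eqref{eq:Weierstrass-high-order-map} by a short algebraic rearrangement of the defining recursion \eqref{eq:Weierstrass-high-order-iteration-function}, the only substantive input being the root-vector hypothesis on $\xi$.

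First I would unwind the assumption $x \in D_{N+1}$. By the recursive definition \eqref{eq:domain-Weierstrass-high-order} of the domains, $x \in D_{N+1}$ means $x \in D_N$ together with $x \,\#\, T^{(N)}(x)$. The condition $x \in D_N$ guarantees that the vector $T^{(N)}(x)$ is defined, and $x \,\#\, T^{(N)}(x)$ gives, via \eqref{eq:special-binary-relation}, that $x_i - T^{(N)}_j(x) \ne 0$ for all $i, j \in I_n$ with $i \ne j$. Consequently, for each fixed $i \in I_n$ the product $\prod_{j \ne i}(x_i - T^{(N)}_j(x))$ appearing in \eqref{eq:Weierstrass-high-order-iteration-function} is nonzero, and each scalar $u_j$ of \eqref{eq:u-definition} is well defined. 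This bookkeeping of the nested domains $D_N$ is the only point that calls for any care; everything after it is elementary.

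Next I would use that $\xi$ is a root-vector of $f$. Writing the factorization of $f$, evaluating at $z = x_i$, and pulling out the $j = i$ factor yields $f(x_i) = (x_i - \xi_i)\prod_{j \ne i}(x_i - \xi_j)$. Substituting this into \eqref{eq:Weierstrass-high-order-iteration-function} and subtracting $\xi_i$ from both sides gives
\[
T^{(N+1)}_i(x) - \xi_i
  = (x_i - \xi_i) - (x_i - \xi_i)\,\frac{\prod_{j \ne i}(x_i - \xi_j)}{\prod_{j \ne i}(x_i - T^{(N)}_j(x))}
  = (x_i - \xi_i)\left(1 - \prod_{j \ne i}\frac{x_i - \xi_j}{x_i - T^{(N)}_j(x)}\right),
\]
where in the first equality $x_i - \xi_i$ has been factored out of $f(x_i)$ and in the second it has been factored out of the whole expression.

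Finally, for each $j \ne i$ I would split the numerator as $x_i - \xi_j = (x_i - T^{(N)}_j(x)) + (T^{(N)}_j(x) - \xi_j)$ and divide, obtaining
\[
\frac{x_i - \xi_j}{x_i - T^{(N)}_j(x)}
  = 1 + \frac{T^{(N)}_j(x) - \xi_j}{x_i - T^{(N)}_j(x)}
  = 1 + u_j .
\]
Inserting this into the previous display gives precisely \eqref{eq:Weierstrass-high-order-map}. I do not expect any genuine obstacle here beyond the domain bookkeeping of the first step; the identity itself is a one-line computation once $f(x_i)$ has been replaced by its factorization.
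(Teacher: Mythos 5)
Your proof is correct. Note that the paper itself gives no proof of this lemma --- it is quoted from reference [PV15] --- so there is nothing internal to compare against; your argument (unwind $x \in D_{N+1}$ to justify the nonvanishing denominators, factor $f(x_i)$ via the root-vector, pull out $x_i-\xi_i$, and split $x_i-\xi_j = (x_i-T_j^{(N)}(x)) + (T_j^{(N)}(x)-\xi_j)$) is the standard derivation and is complete. One small caveat: the root-vector definition carries a leading coefficient $a_0$, so $f(x_i) = a_0(x_i-\xi_i)\prod_{j\ne i}(x_i-\xi_j)$, and your identity (like the recursion \eqref{eq:Weierstrass-high-order-iteration-function} as printed, which omits $a_0$ from the denominator) tacitly assumes $f$ is monic; this is a normalization inconsistency inherited from the paper rather than a flaw in your reasoning, but it is worth stating the monic assumption explicitly.
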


%Lemma 2.7
\begin{lem} [\cite{PV15b}]  \label{lem:u-v-inequalities-2}
Let ${u,v,\xi \in \Kset^n}$, ${\alpha \ge 0}$ and ${1 \le p \le \infty}$. If $v$ is a vector with distinct components such that
\begin{equation} \label{eq:inequality-vectors-cone-norm-h-condition}
    \|u - \xi\| \preceq \alpha \|v - \xi\|,
\end{equation}
then for all ${i,j \in I_n}$,
\begin{equation} \label{eq:u-v-inequalities-2}
    |u_j - v_i| \ge \left( 1 - (1 + \alpha) \left\| \frac{v - \xi}{d(v)} \right\|_p \right) |v_i - v_j|.
\end{equation}
\end{lem}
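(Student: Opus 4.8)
The plan is to estimate $|u_j - v_i|$ from below by reinserting the zero $\xi_j$ and applying the triangle inequality. Writing
\[
u_j - v_i = (u_j - \xi_j) - (v_j - \xi_j) - (v_i - v_j),
\]
I would get
\[
|u_j - v_i| \ge |v_i - v_j| - |v_j - \xi_j| - |u_j - \xi_j|.
\]
Reading the hypothesis \eqref{eq:inequality-vectors-cone-norm-h-condition} coordinate-wise (recall that $\|\cdot\|$ is the cone norm \eqref{eq:cone-norm} and $\preceq$ is the ordering \eqref{eq:coordinate-wise-ordering}) gives $|u_j - \xi_j| \le \alpha\,|v_j - \xi_j|$, so the previous display becomes
\[
|u_j - v_i| \ge |v_i - v_j| - (1 + \alpha)\,|v_j - \xi_j|.
\]

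The second step is to bound $|v_j - \xi_j|$ in terms of $|v_i - v_j|$ and the quantity $\left\|(v-\xi)/d(v)\right\|_p$. Since $v$ has distinct components, $d_j(v) > 0$, so $(v-\xi)/d(v)$ is well defined; and since $|x_j| \le \|x\|_p$ for every $x \in \Rset^n$ and every $p \in [1,\infty]$ (for $p=\infty$ this is immediate, and for $p<\infty$ it follows by dropping all but the $j$th summand), applying this to $x = (v-\xi)/d(v)$ yields $|v_j - \xi_j| \le d_j(v)\,\left\|(v-\xi)/d(v)\right\|_p$. If $i \ne j$, then also $d_j(v) = \min_{k \ne j}|v_j - v_k| \le |v_i - v_j|$, hence $|v_j - \xi_j| \le |v_i - v_j|\,\left\|(v-\xi)/d(v)\right\|_p$. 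Substituting this into the last inequality of the previous paragraph gives exactly \eqref{eq:u-v-inequalities-2}. The remaining case $i = j$ is trivial: then $|v_i - v_j| = 0$, so the right-hand side of \eqref{eq:u-v-inequalities-2} vanishes while the left-hand side $|u_j - v_i|$ is nonnegative.

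I do not expect any genuine obstacle here; the argument is a two-step triangle-inequality estimate. The only points needing a little care are verifying the coordinate domination $|v_j-\xi_j| \le d_j(v)\,\|(v-\xi)/d(v)\|_p$ uniformly over the whole range $1 \le p \le \infty$ (in particular for $p=\infty$), using the hypothesis that $v$ has distinct components to guarantee $d_j(v)\neq 0$ so that the quotient is meaningful, and treating the degenerate index case $i=j$ separately as above.
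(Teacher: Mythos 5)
Your proof is correct: the decomposition $u_j - v_i = (u_j-\xi_j)-(v_j-\xi_j)-(v_i-v_j)$, the coordinate-wise reading of the cone-norm hypothesis, the bound $|v_j-\xi_j|\le d_j(v)\,\|(v-\xi)/d(v)\|_p\le |v_i-v_j|\,\|(v-\xi)/d(v)\|_p$ for $i\ne j$, and the trivial case $i=j$ all check out. Note that this paper only quotes the lemma from \cite{PV15b} without reproducing a proof, so there is nothing in the present text to compare against; your two-step triangle-inequality argument is the standard one used for such estimates in this literature.
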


%Lemma 2.8
\begin{lem} [\cite{PV15b}] \label{lem:u-v-inequalities-3}
Let ${u,v,\xi \in \Kset^n}$, ${\alpha \ge 0}$ and ${1 \le p \le \infty}$. If $v$ is a vector with distinct components such that \eqref{eq:inequality-vectors-cone-norm-h-condition} holds,
then for all ${i,j \in I_n}$,
\begin{equation} \label{eq:u-v-inequalities-3}
    |u_i - u_j| \ge \left(1 - 2^{1/q} \, (1 + \alpha) \left\| \frac{v - \xi}{d(v)} \right\|_p  \right) |v_i - v_j|.
\end{equation}
\end{lem}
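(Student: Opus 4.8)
The plan is to reduce the inequality to a short chain of triangle inequalities followed by a single application of Hölder's inequality, in exactly the same spirit as the proof of Lemma~\ref{lem:u-v-inequalities-2}; the only new feature here is that two ``defect terms'' (one for the index $i$ and one for the index $j$) enter instead of one, and this is precisely what produces the extra constant $2^{1/q}$. As a preliminary step I would dispose of the trivial cases: if $i=j$ the asserted inequality is $0\ge 0$, and if the factor $1-2^{1/q}(1+\alpha)\,\|(v-\xi)/d(v)\|_p$ happens to be negative there is nothing to prove. So from now on assume $i\ne j$ and that this factor is nonnegative; note also that $v$ having distinct components guarantees $d_k(v)>0$ for every $k$, so the vector $E:=(v-\xi)/d(v)$ with $E_k=|v_k-\xi_k|/d_k(v)$ is well defined.

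The main estimate starts from the decomposition
\[
u_i-u_j=(v_i-v_j)+\bigl[(u_i-\xi_i)-(v_i-\xi_i)\bigr]-\bigl[(u_j-\xi_j)-(v_j-\xi_j)\bigr],
\]
whence the triangle inequality gives
\[
|u_i-u_j|\ge |v_i-v_j|-|u_i-\xi_i|-|v_i-\xi_i|-|u_j-\xi_j|-|v_j-\xi_j|.
\]
Now I would invoke the hypothesis \eqref{eq:inequality-vectors-cone-norm-h-condition}, which in coordinates reads $|u_k-\xi_k|\le \alpha\,|v_k-\xi_k|$ for all $k\in I_n$, to collapse this to
\[
|u_i-u_j|\ge |v_i-v_j|-(1+\alpha)\bigl(|v_i-\xi_i|+|v_j-\xi_j|\bigr).
\]

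The next step is to express the right-hand side through $E$. For $k\in\{i,j\}$ we have $d_k(v)=\min_{m\ne k}|v_k-v_m|\le |v_i-v_j|$ (this is where $i\ne j$ is used), hence $|v_k-\xi_k|\le E_k\,|v_i-v_j|$, and therefore
\[
|u_i-u_j|\ge \bigl(1-(1+\alpha)(E_i+E_j)\bigr)|v_i-v_j|.
\]
Finally, applying Hölder's inequality to the two-term sum (equivalently, the power-mean inequality) yields $E_i+E_j\le 2^{1/q}(E_i^p+E_j^p)^{1/p}\le 2^{1/q}\|E\|_p$ for $1\le p<\infty$, while for $p=\infty$ one has $E_i+E_j\le 2\max\{E_i,E_j\}=2^{1/q}\|E\|_\infty$; substituting this bound and recalling $\|E\|_p=\|(v-\xi)/d(v)\|_p$ gives \eqref{eq:u-v-inequalities-3}.

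I do not expect any serious obstacle: the computation is elementary and runs parallel to Lemma~\ref{lem:u-v-inequalities-2}. The one point deserving care is the last Hölder step together with the matching of the endpoints $p=1$ and $p=\infty$, so that $2^{1/q}$ turns out to be the exact constant in all cases; the rest is just bookkeeping of triangle inequalities, with the difference from Lemma~\ref{lem:u-v-inequalities-2} being that both indices $i$ and $j$ now contribute an error term rather than only one.
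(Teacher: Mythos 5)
Your proof is correct. Note that the paper itself states this lemma without proof, citing \cite{PV15b}, so there is no in-text argument to compare against; your derivation — the three-term decomposition of $u_i-u_j$, the coordinate-wise use of \eqref{eq:inequality-vectors-cone-norm-h-condition}, the bound $d_k(v)\le|v_i-v_j|$ for $k\in\{i,j\}$, and H\"older on the two-term sum to produce the constant $2^{1/q}$ — is exactly the natural extension of the argument for Lemma~\ref{lem:u-v-inequalities-2} and handles the endpoint cases $p=1$ and $p=\infty$ properly. The only cosmetic slip is writing $2\max\{E_i,E_j\}=2^{1/q}\|E\|_\infty$ where an inequality $\le$ is meant, which does not affect the chain of estimates.
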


%Lemma 2.9
\begin{lem} \label{lem:Weierstrass-iterated-contraction-inequalities-second-FIC}
Let ${f \in \Kset[z]}$ be a polynomial of degree ${n \ge 2}$, ${\xi \in \Kset^n}$ be a root-vector of $f$, ${N \ge 1}$ and
${1 \le p \le \infty}$. Suppose ${x \in \Kset^n}$ is a
vector with distinct components such that
\begin{equation} \label{eq:Weierstrass-second-FIC-condition}
E(x) \le R,
\end{equation}
where the function $E$ is defined by \eqref{eq:FIC-second-kind}.
Then $f$ has only simple zeros in $\Kset$, ${x \in D_N}$ and
\begin{equation} \label{eq:Weierstrass-contraction-map-inequality-second-FIC}
\|T^{(N)} (x) - \xi \| \preceq \beta_N(E(x)) \|x - \xi \|.
 \end{equation}
\end{lem}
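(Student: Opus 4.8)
The plan is to induct on $N$. Two points are independent of the induction. First, \eqref{eq:Weierstrass-second-FIC-condition} gives $|x_i-\xi_i|\le R\,d_i(x)\le R\,|x_i-x_j|$ for all $i\ne j$, so $|\xi_i-\xi_j|\ge(1-2R)|x_i-x_j|>0$ because $R<1/2$ by \eqref{eq:R-second-local-inequality}; since $\xi$ is a root-vector of $f$, all zeros of $f$ are simple. Second, put $s_j=|x_j-\xi_j|/d_j(x)$; then $s=(s_1,\dots,s_n)$ satisfies $\|s\|_p=E(x)$, and $d_j(x)\le|x_i-x_j|$ yields $|x_j-\xi_j|\le s_j\,|x_i-x_j|$ for $i\ne j$. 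Besides these, the only estimates used are $|1-\prod_{j\ne i}(1+u_j)|\le\prod_{j\ne i}(1+|u_j|)-1$ and the elementary product inequalities $\prod_{j\ne i}(1+s_j)\le\omega(E(x))$ and $\prod_{j\ne i}\bigl(1+\phi_N(E(x))\,s_j\bigr)\le\omega_N(E(x))$, which are exactly how $\omega$ and the $\omega_N$ are built (cf.\ \eqref{eq:omega-phi-Weierstrass}, \eqref{eq:phi-N-Weierstrass-second}). For the base case $N=1$: the set $D_1$ consists precisely of the vectors with distinct components, so $x\in D_1$; and the classical Weierstrass identity — formula \eqref{eq:Weierstrass-high-order-map} with $N$ replaced by $0$ and $T^{(0)}(x)=x$ — gives $T_i^{(1)}(x)-\xi_i=\bigl(1-\prod_{j\ne i}(1+w_j)\bigr)(x_i-\xi_i)$ with $w_j=(x_j-\xi_j)/(x_i-x_j)$, whence $|w_j|\le s_j$ and $|T_i^{(1)}(x)-\xi_i|\le\bigl(\omega(E(x))-1\bigr)|x_i-\xi_i|=\beta_1(E(x))\,|x_i-\xi_i|$, using $\phi_0\equiv 1$ (so $\omega_0=\omega$, $\beta_1=\omega-1$).

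For the inductive step, assume the assertion holds for some $N\ge1$. Then $x\in D_N$ and $\|T^{(N)}(x)-\xi\|\preceq\alpha\,\|x-\xi\|$ with $\alpha=\beta_N(E(x))=\omega_{N-1}(E(x))-1$. Applying Lemma~\ref{lem:u-v-inequalities-2} with $u=T^{(N)}(x)$, $v=x$ and this $\alpha$ gives, for all $i\ne j$,
\[
|x_i-T_j^{(N)}(x)|\ \ge\ \bigl(1-(1+\alpha)E(x)\bigr)|x_i-x_j|\ =\ \bigl(1-\omega_{N-1}(E(x))E(x)\bigr)|x_i-x_j|\ \ge\ \psi_N(E(x))\,|x_i-x_j|\ >\ 0 ,
\]
the last two steps using $1-\omega_{N-1}(E(x))E(x)\ge 1-2\omega_{N-1}(E(x))E(x)=\psi_N(E(x))$ and $\psi_N(E(x))\ge\psi_N(R)>0$ (Definition~\ref{def:beta-psi-N-Weierstrass}; equivalently $\omega(R)R<1/2$ from \eqref{eq:omega-R}). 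Hence $x\,\#\,T^{(N)}(x)$, so $x\in D_{N+1}$, which both finishes one half of the claim and permits the use of Lemma~\ref{lem:Weierstrass-high-order-map}.

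By Lemma~\ref{lem:Weierstrass-high-order-map}, for each $i$,
\[
T_i^{(N+1)}(x)-\xi_i=\Bigl(1-\prod_{j\ne i}(1+u_j)\Bigr)(x_i-\xi_i),\qquad u_j=\frac{T_j^{(N)}(x)-\xi_j}{x_i-T_j^{(N)}(x)} .
\]
Bounding the numerator of $u_j$ by the componentwise induction hypothesis and the denominator by the display above, and using $|x_j-\xi_j|\le s_j|x_i-x_j|$,
\[
|u_j|\ \le\ \frac{\alpha\,s_j}{1-\omega_{N-1}(E(x))E(x)}\ \le\ \frac{\beta_N(E(x))}{\psi_N(E(x))}\,s_j\ =\ \phi_N(E(x))\,s_j
\]
by Lemma~\ref{lem:beta-psi-N-properties-Weierstrass}(ii). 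Therefore, with the two elementary inequalities recalled above,
\[
|T_i^{(N+1)}(x)-\xi_i|\ \le\ \Bigl(\prod_{j\ne i}\bigl(1+\phi_N(E(x))\,s_j\bigr)-1\Bigr)|x_i-\xi_i|\ \le\ \bigl(\omega_N(E(x))-1\bigr)|x_i-\xi_i|\ =\ \beta_{N+1}(E(x))\,|x_i-\xi_i| ,
\]
where $\beta_{N+1}=\omega_N-1$ by \eqref{eq:beta-N-Weierstrass}, \eqref{eq:phi-N-Weierstrass-second}. As $i$ was arbitrary, this is \eqref{eq:Weierstrass-contraction-map-inequality-second-FIC} for $N+1$, closing the induction.

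The routine parts are exactly the two scalar inequalities named in the first paragraph. The delicate point is the chain for $|u_j|$: one must feed the componentwise induction hypothesis into the numerator and the Lemma~\ref{lem:u-v-inequalities-2} bound into the denominator \emph{simultaneously}, and then observe that the denominator $1-\omega_{N-1}(E(x))E(x)$ dominates $\psi_N(E(x))=1-2\omega_{N-1}(E(x))E(x)$, so that the ratio $\beta_N/\psi_N$ — which is precisely $\phi_N$ — emerges with no slack to spare; pushing $\phi_N(E(x))\,s_j$ through the product (keeping the $p$-norm $\|s\|_p=E(x)$, not the crude bound $s_j\le E(x)$) then reproduces $\omega_N(E(x))$, hence $\beta_{N+1}(E(x))$, exactly. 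The positivity $\psi_N>0$ on $[0,R]$ is also what forces $x\in D_{N+1}$, so it does double duty.
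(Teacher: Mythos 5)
Your proof is correct and follows essentially the same route as the paper: induction on $N$, with Lemma~\ref{lem:u-v-inequalities-2} giving the lower bound on $|x_i-T_j^{(N)}(x)|$ (hence $x\in D_{N+1}$), Lemma~\ref{lem:Weierstrass-high-order-map} for the identity, the bound $|u_j|\le\phi_N(E(x))\,s_j$ via $\beta_N=\phi_N\psi_N$, and the product/$p$-norm inequality to recover $\omega_N-1=\beta_{N+1}$. The only cosmetic differences are that you prove the simple-zeros claim and the base case $N=1$ directly rather than citing \cite{Pro15a}, and you take $\alpha=\beta_N(E(x))$ instead of the paper's cruder $\alpha=1$ in Lemma~\ref{lem:u-v-inequalities-2}; both choices lead to the same bound $\|u\|_p\le E(x)\phi_N(E(x))$.
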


\begin{proof}
It follows from Proposition 5.3 of \cite{Pro15a} that the vector $\xi$ has distinct components,
which means that $f$ has only simple zeros in $\Kset$.
Further, we proceed by induction.
If ${N = 1}$, then the proof can be found in \cite{Pro15a}.
Assume that both ${x \in D_N}$ and \eqref{eq:Weierstrass-contraction-map-inequality-second-FIC} hold for some ${N \ge 1}$.

First, we prove that ${x \in D_{N + 1}}$.
It follows from \eqref{eq:Weierstrass-contraction-map-inequality-second-FIC} that condition
\eqref{eq:inequality-vectors-cone-norm-h-condition}
is satisfied with ${u = T^{(N)}(x)}$, ${v = x}$ and ${\alpha = 1}$.
Therefore, by Lemma~\ref{lem:u-v-inequalities-2}, taking into account \eqref{eq:Weierstrass-second-FIC-condition} and the fact that $x$ is a vector with distinct components, we obtain
\begin{equation} \label{eq:x-TN-inequality-second-Weierstrass}
     |x_i - T_j^{(N)}(x)| \ge \left( 1 - 2 \left\| \frac{x - \xi}{d(x)} \right\|_p \right) |x_i - x_j|
     \ge (1 -  2 E(x)) \, d_j(x) > 0
\end{equation}
for every ${j \ne i}$.
Consequently, ${x \, \# \, T^{(N)}(x)}$ which proves that ${x \in D_{N + 1}}$.

Second, we shall prove that \eqref{eq:Weierstrass-contraction-map-inequality-second-FIC} is true for ${N + 1}$.
Obviously, the last statement is equivalent to
\begin{equation} \label{eq:Weierstrass-contraction-map-inequality-i-coordinate-second}
 |T^{(N + 1)}_i(x) - \xi_i| \le  \beta_{N + 1}(E(x)) |x_i - \xi_i|  \quad\text{for every } i \in I_n \, .
\end{equation}
Let ${i \in I_n}$ be fixed. We consider the vector ${u = (u_j)_{j \ne i} \in \Kset^{n - 1}}$, where ${u_j}$ is defined by \eqref{eq:u-definition}.
It follows from \eqref{eq:Weierstrass-contraction-map-inequality-second-FIC} and \eqref{eq:x-TN-inequality-second-Weierstrass} that
\[
|u_j| =  \frac{|T_j ^{(N)}(x) - \xi_j|}{|x_i - T_j ^{(N)}(x)|} \le \frac{\beta_N(E(x))}{1 - 2 E(x)} \frac{|x_j - \xi_j|}{d_j(x)} \, .
\]
Taking the $p$-norm and using Lemma~\ref{lem:beta-psi-N-properties-Weierstrass}(ii) and ${\psi_N(t) \le 1}$, we get
\begin{equation} \label{eq:u-p-norm-inequality-second_FIC}
\|u\|_p \le \frac{E(x) \beta_N(E(x))}{1 - 2 E(x)}
= \frac{E(x) \phi_N(E(x)) \psi_N(E(x))}{1 - 2 E(x)}
\le E(x) \phi_N(E(x)).
\end{equation}
Then, from Lemma~\ref{lem:Weierstrass-high-order-map}, Lemma~3.2 of \cite{PC14},
\eqref{eq:u-p-norm-inequality-second_FIC} and \eqref{eq:beta-N-Weierstrass}, we obtain
\begin{eqnarray*}
 |T^{(N + 1)}_i(x) - \xi_i| & = & \left|\prod_{j \ne i} (1 + u_j) - 1 \right| |x_i - \xi_i| \\
& \le & \left[ \left(1 + \frac{\|u\|_p}{(n - 1)^{1/p}} \right)^{n - 1} - 1 \right] |x_i - \xi_i|\\
& = & (\omega_{N}(E(x)) - 1) |x_i - \xi_i| = \beta_{N + 1}(E(x)) |x_i - \xi_i|
\end{eqnarray*}
which proves that \eqref{eq:Weierstrass-contraction-map-inequality-i-coordinate-second} is true for ${N + 1}$.
This completes the induction and the proof.
\end{proof}

%Lemma 2.10
\begin{lem} \label{lem:Weierstrass-iterated-contraction-second-FIC}
Let ${f \in \Kset[z]}$ be a polynomial of degree ${n \ge 2}$, ${\xi \in \Kset^n}$ be a root-vector of $f$, ${N \ge 1}$ and
${1 \le p \le \infty}$.
Let ${T^{(N)} \colon D_N \subset \Kset^n \to \Kset^n}$  and
${E \colon D_N \to \Rset_+}$ be defined by Definition~\ref{def:iteration-function-Weierstrass-type} and
\eqref{eq:FIC-second-kind}, respectively. Then:
\begin{enumerate}[(i)]
  \item  ${E}$ is a function of initial conditions of $T^{(N)}$ with a gauge function $\varphi_N$ of order ${N + 1}$ on ${J = [0,R]}$;
  \item ${T^{(N)}}$ is an iterated contraction at $\xi$ with respect to $E$ with control function ${\beta_N}$;
  \item Every point ${x^{(0)} \in \Kset^n}$ such that ${E(x^{(0)}) \in J}$  is an initial point of ${T^{(N)}}$.
\end{enumerate}
\end{lem}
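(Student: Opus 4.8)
The plan is to reduce all three assertions to Lemma~\ref{lem:Weierstrass-iterated-contraction-inequalities-second-FIC}, the structural properties of $\beta_N,\psi_N,\phi_N,\varphi_N$ gathered in Lemmas~\ref{lem:phi-N-properties-Weierstrass-second} and \ref{lem:beta-psi-N-properties-Weierstrass}, and the general theory of functions of initial conditions, iterated contractions and initial points developed in \cite{Pro15a}. The one new ingredient needed is a quantitative bound showing that the function $E$ contracts along the orbit at the rate prescribed by $\varphi_N$.

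Concretely, I would first establish that if $x \in \Kset^n$ has distinct components and $E(x) \le R$, then $T^{(N)}(x) \in D_N$ and $E(T^{(N)}(x)) \le \varphi_N(E(x))$. By Lemma~\ref{lem:Weierstrass-iterated-contraction-inequalities-second-FIC} we have $x \in D_N$ and $\|T^{(N)}(x) - \xi\| \preceq \beta_N(E(x)) \|x - \xi\|$, so \eqref{eq:inequality-vectors-cone-norm-h-condition} holds with $u = T^{(N)}(x)$, $v = x$ and $\alpha = \beta_N(E(x))$. Since $1 + \beta_N(t) = \omega_{N-1}(t)$ by Definition~\ref{def:beta-psi-N-Weierstrass}, Lemma~\ref{lem:u-v-inequalities-3} together with $2^{1/q} \le 2$ gives, for all $j \ne i$,
\[
|T_i^{(N)}(x) - T_j^{(N)}(x)| \ge \bigl(1 - 2\,E(x)\,\omega_{N-1}(E(x))\bigr)\,|x_i - x_j| = \psi_N(E(x))\,|x_i - x_j|,
\]
and $\psi_N(E(x)) > 0$ by Definition~\ref{def:beta-psi-N-Weierstrass}. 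Hence $T^{(N)}(x)$ has distinct components and $d_i(T^{(N)}(x)) \ge \psi_N(E(x))\,d_i(x)$ for every $i$. Dividing the coordinate bound $|T_i^{(N)}(x) - \xi_i| \le \beta_N(E(x))\,|x_i - \xi_i|$ by this, taking the $p$-norm, and using Lemma~\ref{lem:beta-psi-N-properties-Weierstrass}(ii) yields
\[
E(T^{(N)}(x)) \le \frac{\beta_N(E(x))}{\psi_N(E(x))}\,E(x) = \phi_N(E(x))\,E(x) = \varphi_N(E(x)).
\]
Finally $\varphi_N(E(x)) \le E(x) \le R$ because $\phi_N \le 1$, so $T^{(N)}(x) \in D_N$ by a second application of Lemma~\ref{lem:Weierstrass-iterated-contraction-inequalities-second-FIC}.

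With this estimate in hand, claim (i) is immediate: $\varphi_N$ is a gauge function of order $N+1$ on $J$ by Lemma~\ref{lem:phi-N-properties-Weierstrass-second}(iv), and the inclusion $T^{(N)}(x) \in D_N$ together with $E(T^{(N)}(x)) \le \varphi_N(E(x))$ for all $x \in D_N$ with $E(x) \in J$ is precisely the definition of $E$ being a function of initial conditions of $T^{(N)}$ with gauge function $\varphi_N$. Claim (ii) follows directly from the contraction inequality in Lemma~\ref{lem:Weierstrass-iterated-contraction-inequalities-second-FIC} and the fact, recorded in Definition~\ref{def:beta-psi-N-Weierstrass}, that $\beta_N$ maps $J$ into $[0,1)$ (the orbit stays in $D_N$ by (i)). Claim (iii) is then a formal consequence of (i), (ii) and the general results of \cite{Pro15a}: a point $x^{(0)}$ with $E(x^{(0)}) \in J$ necessarily has distinct components, hence lies in $D_N$ by Lemma~\ref{lem:Weierstrass-iterated-contraction-inequalities-second-FIC}; its iterates are well-defined and remain in $J$ by (i), and converge to $\xi$ by the iterated-contraction property of (ii) since $\beta_N$ is nondecreasing and $\beta_N(t) < 1$ on $J$.

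The main obstacle is the displayed estimate of the second paragraph. The delicate points there are that the lower bound for $d(T^{(N)}(x))$ coming from Lemma~\ref{lem:u-v-inequalities-3} must be dominated below by $\psi_N(E(x))\,d(x)$ — this is exactly where the conjugate-exponent inequality $2^{1/q} \le 2$ is used — and that the resulting ratio $\beta_N/\psi_N$ has to be identified with $\phi_N$ via Lemma~\ref{lem:beta-psi-N-properties-Weierstrass}(ii). Everything else is bookkeeping with the definitions and the quasi-homogeneity statements already proved.
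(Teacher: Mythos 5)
Your proposal is correct and follows essentially the same route as the paper: the central estimate $E(T^{(N)}(x)) \le \varphi_N(E(x))$ is obtained exactly as in the paper's proof, by applying Lemma~\ref{lem:u-v-inequalities-3} with $\alpha = \beta_N(E(x))$, using $2^{1/q} \le 2$ and $1+\beta_N = \omega_{N-1}$ to get $d_i(T^{(N)}(x)) \ge \psi_N(E(x))\,d_i(x) > 0$, and then identifying $\beta_N/\psi_N = \phi_N$ via Lemma~\ref{lem:beta-psi-N-properties-Weierstrass}(ii); claims (i)--(iii) then follow as in the paper (the paper cites Proposition~2.7 of \cite{Pro15a} for (iii), which is the formal step you allude to).
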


\begin{proof}
(i)
First, we prove that
\begin{equation} \label{eq:Weierstrass-FIN-inequality-second-FIC}
E(T^{(N)} (x)) \le \varphi_N(E(x)) \quad\text{for all } \, x \in \mathcal{D} \, \text{ such that } \, E(x) \in J.
\end{equation}
Condition \eqref{eq:Weierstrass-contraction-map-inequality-second-FIC} allow us to apply Lemma~\ref{lem:u-v-inequalities-3} with
${u = T^{(N)}(x) }$, ${v = x}$ and ${\alpha = \beta_N(E(x))}$.
Therefore, we get
\[
 |T_i^{(N)}(x) - T_j^{(N)}(x)| \ge (1 - 2^{1/q} E(x)(1 + \beta_N(E(x))) |x_i - x_j| \ge 
%(1 - 2 E(x)(1 + \beta_N(E(x))) |x_i - x_j| =
 \psi_N(E(x)) |x_i - x_j|.
\]
Taking minimum over ${j \ne i}$ on both sides of this inequality, we obtain
\begin{equation} \label{eq:d-Tx-inequality-psi-Weierstrass}
 d_i(T^{(N)}(x)) \ge \psi_N(E(x)) d_i(x) > 0.
\end{equation}
It follows from \eqref{eq:Weierstrass-contraction-map-inequality-i-coordinate-second},
\eqref{eq:d-Tx-inequality-psi-Weierstrass} and Lemma~\ref{lem:beta-psi-N-properties-Weierstrass}(ii) that
\[
   \frac{|T_i^{(N)}(x) - \xi_i|}{ d_i(T^{(N)}(x)) }
   \le \frac{\beta_N(E(x))}{\psi_N(E(x))} \frac{|x_i - \xi_i|}{d_i(x)}
   = \phi_{N}(E(x)) \frac{|x_i - \xi_i|}{d_i(x)} \, .
\]
Taking the $p$-norm on both sides of this inequality, we get
\begin{equation} \label{eq:Weierstrass-FIN-inequality-second-FIC-2}
   E(T^{(N)} (x)) \le \phi_{N}(E(x))\, E(x) = \varphi_N(E(x))
\end{equation}
which proves \eqref{eq:Weierstrass-FIN-inequality-second-FIC}.
Now Claim (i) follows from \eqref{eq:Weierstrass-FIN-inequality-second-FIC} and Lemma~\ref{lem:phi-N-properties-Weierstrass-second}(iv).

(ii)
It follows from Lemma~\ref{lem:Weierstrass-iterated-contraction-inequalities-second-FIC}.

(iii)
It follows from Lemma~\ref{lem:Weierstrass-iterated-contraction-inequalities-second-FIC} that ${x^{(0)} \in D_N}$.
According to Proposition~2.7 of \cite{Pro15a} to prove that ${x^{(0)}}$ is an initial point of ${T^{(N)}}$
it is sufficient to prove that
\begin{equation} \label{eq:initial-point-test-Weierstrass}
	x \in D_N \,\, \text{ and } \,\, E(x) \in J \,\, \Rightarrow \,\, T^{(N)}(x) \in D_N \, .
\end{equation}
From ${x \in D_N}$, we conclude that ${T^{(N)}(x) \in \Kset^n}$.
It follows from \eqref{eq:d-Tx-inequality-psi-Weierstrass} that ${T^{(N)}(x) \in \mathcal{D}}$.
The inequality \eqref{eq:Weierstrass-FIN-inequality-second-FIC-2} implies ${E(T^{(N)}(x)) \in J}$ since ${\varphi_N \colon J \to J}$ and
${E(x) \in J}$.
Thus we have both ${T^{(N)}(x) \in \mathcal{D}}$ and ${E(T^{(N)}(x)) \in J}$.
Applying Lemma~\ref{lem:Weierstrass-iterated-contraction-inequalities-second-FIC} to the vector ${T^{(N)}(x)}$,
we get ${T^{(N)}(x) \in D_N}$ which proves \eqref{eq:initial-point-test-Weierstrass}.
Hence, ${x^{(0)}}$ is an initial point of ${T^{(N)}}$.
\end{proof}

Now, we are ready to state and prove the main result in this section. In the case ${N = 1}$ and ${p = \infty}$ this result reduces to
Theorem~7.3 of \cite{Pro15a}.

%Theorem 2.11
\begin{thm} \label{thm:second-local-Weierstrass}
Let ${f \in \Kset[z]}$ be a polynomial of degree ${n \ge 2}$,
${\xi \in \Kset^n}$ be a root-vector of $f$, ${N \ge 1}$ and ${1 \le p \le \infty}$.
Suppose ${x^{(0)} \in \Kset^n}$ is a vector with distinct components such that
\begin{equation} \label{eq:second-local-Weierstrass-initial-condition}
\Psi(E(x^{(0)})) \le 2,
\end{equation}
where the function $E$ is defined by \eqref{eq:FIC-second-kind} and $\Psi$ is defined by \eqref{eq:Psi-Weierstrass}.
Then $f$ has only simple zeros and the Weierstrass-type iteration \eqref{eq:Weierstrass-high-order} is well-defined and converges to $\xi$ with error estimates
\begin{equation} \label{eq:second-local-Weierstrass-error-estimate}
\|x^{(k+1)} - \xi\| \preceq \theta \lambda^{(N + 1)^k} \, \|x^{(k)} - \xi\|
\quad\text{and}\quad
  \|x^{(k)} - \xi\| \preceq \theta^ k \lambda^{((N + 1)^k - 1) /N} \|x^{(0)} - \xi\|,
\end{equation}
for all ${k \ge 0}$, where ${\lambda = \phi_N(E(x^{(0)}))}$, ${\theta = \psi_N(E(x^{(0)}))}$ and the real functions
$\phi_N$ and $\psi_N$ are defined by Definition~\ref{def:phi-N-Weierstrass-second} and Definition~\ref{def:beta-psi-N-Weierstrass}, respectively.
Moreover, if the inequality in \eqref{eq:second-local-Weierstrass-initial-condition} is strict, then
the Weierstrass-type iteration converges to $\xi$ with order of convergence ${N + 1}$.
\end{thm}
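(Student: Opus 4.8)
The plan is to derive the theorem from the abstract convergence machinery of \cite{Pro15a} together with Lemmas~\ref{lem:Weierstrass-iterated-contraction-inequalities-second-FIC} and \ref{lem:Weierstrass-iterated-contraction-second-FIC}, so that the genuinely new work is only bookkeeping. First I would note that the function $\Psi$ in \eqref{eq:Psi-Weierstrass} is continuous and strictly increasing on $\Rset_+$ with $\Psi(0)=1$ and $\Psi(t)\to\infty$; hence $\Psi(t)=2$ has the unique positive root $R$, and the initial condition \eqref{eq:second-local-Weierstrass-initial-condition} is equivalent to $E(x^{(0)})\le R$, i.e.\ to $E(x^{(0)})\in J:=[0,R]$. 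Since $x^{(0)}$ has distinct components, Lemma~\ref{lem:Weierstrass-iterated-contraction-inequalities-second-FIC} then immediately gives that $f$ has only simple zeros.

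Next I would invoke Lemma~\ref{lem:Weierstrass-iterated-contraction-second-FIC}: $E$ is a function of initial conditions of $T^{(N)}$ with gauge function $\varphi_N$ of order $N+1$ on $J$, the map $T^{(N)}$ is an iterated contraction at $\xi$ with respect to $E$ with control function $\beta_N$, and $x^{(0)}$ (being a point with $E(x^{(0)})\in J$) is an initial point of $T^{(N)}$. These are exactly the hypotheses of the general convergence theorem for iterated contractions in \cite{Pro15a} (in the form giving both convergence and a posteriori estimates), whose application yields at once that the iteration \eqref{eq:Weierstrass-high-order} is well defined, all iterates $x^{(k)}$ lie in $D_N$, the sequence converges to $\xi$, and moreover $E(x^{(k)})\le\varphi_N^{(k)}(E(x^{(0)}))$ and $\|x^{(k+1)}-\xi\|\preceq\beta_N(E(x^{(k)}))\,\|x^{(k)}-\xi\|$ for every $k\ge 0$.

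It then remains to convert these two abstract estimates into \eqref{eq:second-local-Weierstrass-error-estimate}, and this is the only computational step. Write $t_0=E(x^{(0)})$, $\lambda=\phi_N(t_0)$, $\theta=\psi_N(t_0)$, and assume $t_0>0$ (otherwise $x^{(0)}=\xi$ and there is nothing to prove). Since $\phi_N$ is nondecreasing and quasi-homogeneous of degree $N$ (Lemma~\ref{lem:phi-N-properties-Weierstrass-second}(i)) and $\varphi_N(t)=t\phi_N(t)\le t$, a short induction on $k$ gives $\varphi_N^{(k)}(t_0)\le\lambda^{((N+1)^k-1)/N}\,t_0$ (the exponents obey $e_{k+1}=(N+1)e_k+1$, $e_0=0$, so $e_k=((N+1)^k-1)/N$). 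Because $\beta_N$ is increasing and quasi-homogeneous of degree $N$ (Definition~\ref{def:beta-psi-N-Weierstrass}, Lemma~\ref{lem:beta-psi-N-properties-Weierstrass}(i)) and $\beta_N(t_0)=\phi_N(t_0)\psi_N(t_0)=\lambda\theta$ by Lemma~\ref{lem:beta-psi-N-properties-Weierstrass}(ii), I would then estimate
\[
\beta_N(E(x^{(k)}))\le\beta_N\!\bigl(\varphi_N^{(k)}(t_0)\bigr)\le\lambda^{(N+1)^k-1}\,\beta_N(t_0)=\lambda^{(N+1)^k-1}\,\lambda\theta=\theta\,\lambda^{(N+1)^k},
\]
where the middle inequality uses quasi-homogeneity of $\beta_N$ of degree $N$ applied to $\varphi_N^{(k)}(t_0)\le\lambda^{((N+1)^k-1)/N}t_0$. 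Combined with $\|x^{(k+1)}-\xi\|\preceq\beta_N(E(x^{(k)}))\|x^{(k)}-\xi\|$ this is the first inequality in \eqref{eq:second-local-Weierstrass-error-estimate}; telescoping over $k$ steps and using $\sum_{j=0}^{k-1}(N+1)^j=((N+1)^k-1)/N$ gives the second.

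Finally, for the last assertion: if the inequality in \eqref{eq:second-local-Weierstrass-initial-condition} is strict then $t_0<R$, and quasi-homogeneity of $\phi_N$ gives $\lambda=\phi_N(t_0)\le(t_0/R)^N\phi_N(R)=(t_0/R)^N<1$ (recall $\phi_N(R)=1$). The second estimate in \eqref{eq:second-local-Weierstrass-error-estimate} then forces $\|x^{(k)}-\xi\|^{1/(N+1)^k}\to\lambda^{1/N}<1$, which together with the first estimate gives convergence to $\xi$ of order $N+1$ — alternatively one simply reads this off the corresponding clause of the theorem in \cite{Pro15a}. I do not expect a real obstacle; the one point to get right is the quasi-homogeneity arithmetic in the third paragraph — in particular using quasi-homogeneity of $\beta_N$ itself rather than only the crude bound $\beta_N\le\phi_N$ — since that is precisely what produces the sharp factor $\theta$ in the error estimates.
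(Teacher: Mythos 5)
Your proposal is correct and follows essentially the same route as the paper: reduce the initial condition to $E(x^{(0)})\in[0,R]$ via monotonicity of $\Psi$, then feed Lemma~\ref{lem:Weierstrass-iterated-contraction-second-FIC} and Lemma~\ref{lem:beta-psi-N-properties-Weierstrass} into the general convergence theorem for iterated contractions (Theorem~3.3 of \cite{Pro15a}), which the paper simply cites. The only difference is that you unpack that abstract theorem's bookkeeping (the recursion $e_{k+1}=(N+1)e_k+1$ and the quasi-homogeneity estimate for $\beta_N$) explicitly, and your arithmetic there is right.
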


\begin{proof}
Since the function $\Psi$ is increasing on $\Rset_+$ and $R$ is the unique positive solution of the equation ${\Psi(t) = 2}$,
then the initial condition \eqref{eq:second-local-Weierstrass-initial-condition} is equivalent to ${E(x^{(0)}) \in J}$,
 where ${J = [0,R]}$.
Now the statement of Theorem~\ref{thm:second-local-Weierstrass} follows from Theorem~3.3 of \cite{Pro15a},
Lemma~\ref{lem:Weierstrass-iterated-contraction-second-FIC} and Lemma~\ref{lem:beta-psi-N-properties-Weierstrass}.
\end{proof}

The following result is a simplified version of Theorem~\ref{thm:second-local-Weierstrass}.
It involves only the function $\phi$ defined by \eqref{eq:omega-phi-Weierstrass}.

%Corollary 2.12
\begin{cor} \label{cor:second-local-Weierstrass-type}
Let ${f \in \Kset[z]}$ be a polynomial of degree ${n \ge 2}$,
${\xi \in \Kset^n}$ be a root-vector of $f$, ${N \ge 1}$ and ${1 \le p \le \infty}$.
Suppose ${x^{(0)} \in \Kset^n}$ is a vector with distinct components satisfying
\eqref{eq:second-local-Weierstrass-initial-condition}.
Then $f$ has only simple zeros and the Weierstrass-type iteration \eqref{eq:Weierstrass-high-order} is well-defined and converges to $\xi$ with error estimates
\begin{equation} \label{eq:second-local-Weierstrass-cor-error-estimate}
\|x^{(k+1)} - \xi\| \preceq \lambda^{N (N + 1)^k}  \, \|x^{(k)} - \xi\|
\quad\text{and}\quad
  \|x^{(k)} - \xi\| \preceq \lambda^{(N + 1)^k - 1} \|x^{(0)} - \xi\|
\end{equation}
for all ${k \ge 0}$, where ${\lambda = \phi(E(x^{(0)}))}$ and the real function
$\phi$ is defined by \eqref{eq:omega-phi-Weierstrass}.
Moreover, if the inequality in \eqref{eq:second-local-Weierstrass-initial-condition} is strict, then
the Weierstrass-type iteration converges to $\xi$ with order of convergence ${N + 1}$.
\end{cor}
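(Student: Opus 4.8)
The plan is to obtain the corollary as a direct specialization of Theorem~\ref{thm:second-local-Weierstrass}, replacing the pair of control quantities $\lambda = \phi_N(E(x^{(0)}))$, $\theta = \psi_N(E(x^{(0)}))$ that appear there by the single quantity $\mu = \phi(E(x^{(0)}))$ used in the statement. First I would note, exactly as in the proof of Theorem~\ref{thm:second-local-Weierstrass}, that since $\Psi$ is increasing on $\Rset_+$ and $R$ is the unique positive solution of $\Psi(t) = 2$, the initial condition \eqref{eq:second-local-Weierstrass-initial-condition} is equivalent to ${E(x^{(0)}) \in J}$ with ${J = [0,R]}$. In particular $\mu = \phi(E(x^{(0)}))$ is well defined and, since $\phi \colon [0,R] \to [0,1]$, we have ${0 \le \mu \le 1}$. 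All hypotheses of Theorem~\ref{thm:second-local-Weierstrass} are therefore satisfied, so $f$ has only simple zeros, the iteration \eqref{eq:Weierstrass-high-order} is well defined and converges to $\xi$, and the estimates \eqref{eq:second-local-Weierstrass-error-estimate} hold with ${\lambda = \phi_N(E(x^{(0)}))}$ and ${\theta = \psi_N(E(x^{(0)}))}$.

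Next I would pass from these estimates to \eqref{eq:second-local-Weierstrass-cor-error-estimate}. By Lemma~\ref{lem:phi-N-properties-Weierstrass-second}(iii) we have ${\phi_N(t) \le \phi(t)^N}$ on ${[0,R]}$, hence ${0 \le \lambda \le \mu^N \le 1}$; and since $\psi_N$ takes values in ${(0,1]}$ (Definition~\ref{def:beta-psi-N-Weierstrass}) we have ${0 < \theta \le 1}$, so also ${\theta^k \le 1}$ for every ${k \ge 0}$. Because ${s \mapsto s^m}$ is nondecreasing on ${[0,\infty)}$ for every real ${m \ge 0}$, raising ${\lambda \le \mu^N}$ to the nonnegative powers ${(N+1)^k}$ and ${((N+1)^k - 1)/N}$ gives
\[
\lambda^{(N+1)^k} \le \mu^{N(N+1)^k}
\qquad\text{and}\qquad
\lambda^{((N+1)^k - 1)/N} \le \mu^{(N+1)^k - 1}.
\]
Multiplying the first inequality by ${\theta \le 1}$ and the second by ${\theta^k \le 1}$ and substituting into \eqref{eq:second-local-Weierstrass-error-estimate} yields the two estimates in \eqref{eq:second-local-Weierstrass-cor-error-estimate} with ${\lambda = \phi(E(x^{(0)}))}$, as claimed. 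Finally, the statement on the order of convergence ${N+1}$ under the strict inequality in \eqref{eq:second-local-Weierstrass-initial-condition} is identical to the corresponding assertion of Theorem~\ref{thm:second-local-Weierstrass}, so it carries over verbatim.

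There is no genuine obstacle in this argument; it is a clean weakening of the theorem's conclusion. The only points requiring a little care are: checking that the exponents ${(N+1)^k}$ and ${((N+1)^k-1)/N}$ are nonnegative (so that monotonicity of the power functions may be invoked), confirming that both ${\lambda \le 1}$ and ${\mu \le 1}$ (which is built into the codomains fixed in Definition~\ref{def:phi-N-Weierstrass-second} and in \eqref{eq:omega-phi-Weierstrass}), and verifying that the second estimate degenerates correctly at ${k = 0}$, where both sides of ${\lambda^{((N+1)^k-1)/N} \le \mu^{(N+1)^k-1}}$ reduce to ${1}$ and the inequality ${\|x^{(0)}-\xi\| \preceq \|x^{(0)}-\xi\|}$ is trivial.
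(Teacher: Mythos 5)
Your proposal is correct and follows essentially the same route as the paper: the paper's proof is a one-line citation of Theorem~\ref{thm:second-local-Weierstrass}, Lemma~\ref{lem:phi-N-properties-Weierstrass-second}(ii) (from which part (iii), the bound $\phi_N(t)\le\phi(t)^N$ that you invoke, follows immediately), and the inequality $0<\psi_N(t)\le 1$. You have simply written out the exponent bookkeeping that the paper leaves implicit.
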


\begin{proof}
It follows from Theorem~\ref{thm:second-local-Weierstrass}, Lemma~\ref{lem:phi-N-properties-Weierstrass-second}(ii)
and the inequality ${0 < \psi_N(t) \le 1}$ which holds for every ${t \in [0, R]}$.
\end{proof}

We end this section with a convergence theorem under initial condition in explicit form.

%Corollary 2.13
\begin{cor} \label{cor:second-local-Weierstrass}
Let ${f \in \Kset[z]}$ be a polynomial of degree ${n \ge 2}$,
${\xi \in \Kset^n}$ be a root-vector of $f$, ${N \ge 1}$ and ${1 \le p \le \infty}$.
Suppose ${x^{(0)} \in \Kset^n}$ is a vector with distinct components such that
\begin{equation} \label{eq:second-local-Weierstrass-initial-condition-cor}
E(x^{(0)}) = \left \| \frac{x^{(0)} - \xi}{d(x^{(0)})} \right \|_p \le  \frac{n (2^{1/n} - 1)}{(n - 1)^{1/q} + 2} \, ,
\end{equation}
where the function $E$ is defined by \eqref{eq:FIC-second-kind} and $q$ is the conjugate exponent of $p$.
Then the Weierstrass-type iteration \eqref{eq:Weierstrass-high-order} is well-defined and converges to $\xi$ with order of convergence
${N + 1}$ and with error estimates
\eqref{eq:second-local-Weierstrass-error-estimate} and \eqref{eq:second-local-Weierstrass-cor-error-estimate}.
\end{cor}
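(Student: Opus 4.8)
The plan is to reduce the statement to Theorem~\ref{thm:second-local-Weierstrass} and Corollary~\ref{cor:second-local-Weierstrass-type} by checking that the explicit bound \eqref{eq:second-local-Weierstrass-initial-condition-cor} is stronger than the strict form of the initial condition \eqref{eq:second-local-Weierstrass-initial-condition}. The crucial remark is that the constant on the right-hand side of \eqref{eq:second-local-Weierstrass-initial-condition-cor} is exactly the lower bound for $R$ displayed in \eqref{eq:R-second-local-inequality}; since that is a \emph{strict} lower bound for $R$, the hypothesis $E(x^{(0)}) \le \frac{n(2^{1/n}-1)}{(n-1)^{1/q}+2}$ gives at once $E(x^{(0)}) < R$.

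Next I would pass from this inequality back to a condition on $\Psi$. The function $\Psi$ in \eqref{eq:Psi-Weierstrass} is the product of the strictly increasing positive factor $1+2t$ and the nondecreasing positive factor $\omega(t)$, hence is strictly increasing on $\Rset_+$, and by the definition of $R$ we have $\Psi(R)=2$. Therefore $E(x^{(0)}) < R$ yields $\Psi(E(x^{(0)})) < 2$; in other words, $x^{(0)}$ satisfies \eqref{eq:second-local-Weierstrass-initial-condition} with strict inequality, so in particular it fulfils the hypotheses of both Theorem~\ref{thm:second-local-Weierstrass} and Corollary~\ref{cor:second-local-Weierstrass-type}.

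Finally I would simply quote those two results. The ``moreover'' part of Theorem~\ref{thm:second-local-Weierstrass} gives that the iteration \eqref{eq:Weierstrass-high-order} is well-defined, converges to $\xi$ with order of convergence $N+1$, and obeys the error estimates \eqref{eq:second-local-Weierstrass-error-estimate}; Corollary~\ref{cor:second-local-Weierstrass-type}, applied under the same strict hypothesis, supplies the estimates \eqref{eq:second-local-Weierstrass-cor-error-estimate}. I do not expect any genuine obstacle here: the whole argument rests on the one-line observation that the explicit constant equals the lower estimate for $R$ from \eqref{eq:R-second-local-inequality}, which was already justified in the text following that display, together with the monotonicity of $\Psi$ already invoked in the proof of Theorem~\ref{thm:second-local-Weierstrass}.
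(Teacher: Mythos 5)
Your proposal is correct and follows essentially the same route as the paper, which proves the corollary by combining Theorem~\ref{thm:second-local-Weierstrass}, Corollary~\ref{cor:second-local-Weierstrass-type} and the (strict) lower estimate in \eqref{eq:R-second-local-inequality}; you have merely made explicit the intermediate step that $E(x^{(0)}) < R$ is equivalent, via the monotonicity of $\Psi$ and $\Psi(R)=2$, to the strict form of \eqref{eq:second-local-Weierstrass-initial-condition}.
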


\begin{proof}
It follows from Theorem~\ref{thm:second-local-Weierstrass}, Corollary~\ref{cor:second-local-Weierstrass-type}
and the lower estimate in \eqref{eq:R-second-local-inequality}.
\end{proof}

%%%%%%%%%%%%%%%%%%%%%%%%%%%%%%%%%%%%%%%%%%%%%%%%%%%%%%%%
%%
%%        Semilocal convergence theorems for the Weierstrass-type methods
%%
%%%%%%%%%%%%%%%%%%%%%%%%%%%%%%%%%%%%%%%%%%%%%%%%%%%%%%%%

%Section 3
\section{Semilocal convergence theorems for the Weierstrass-type methods}
\label{sec:semilocal-theorem-Weierstrass}

In this section, we present two semilocal convergence theorem for Weierstrass-type methods \eqref{eq:Weierstrass-high-order}
with computationally verifiable initial conditions and with computationally verifiable a posteriori error estimates.
We study the convergence of the Weierstrass-type methods \eqref{eq:Weierstrass-high-order} with respect to
the  function of initial conditions ${E \colon \mathcal{D} \to \Rset_+}$ defined by
\begin{equation} \label{eq:FIC-third}
E_f(x) =  \left \| \frac{W_f(x)}{d(x)} \right \|_p \qquad (1 \le p \le \infty).
\end{equation}

\subsection{First semilocal convergence theorem}
\label{subsec:First-semilocal-theorem-Weierstrass}

Recently Proinov \cite{Pro15b} have proposed a new approach for obtaining semilocal convergence results for simultaneous methods via local convergence results. In particular, from Theorem~\ref{thm:second-local-Weierstrass}, we can obtain a convergence theorem for
Weierstrass-type methods \eqref{eq:Weierstrass-high-order} under computationally verifiable initial conditions.
In what follows $q$ is the conjugate exponent of $p$.

%Theorem 3.1
\begin{thm}[Proinov \cite{Pro15b}] \label{prop:semilocal-theorem-second-type-Weierstrass}
Let  ${f \in \Kset[z]}$ be
a polynomial of degree ${n \ge 2}$. Suppose there exists a vector ${x \in \Kset^n}$ with
distinct components such that
\begin{equation} \label{eq:semilocal-theorem-initial-condition-second-kind-Weierstrass}
 E_f(x) = \left \| \frac{W_f(x)}{d(x)} \right \|_p \le \frac{1}{(1 + \sqrt{a})^2}
\end{equation}
for some ${1 \le p \le \infty}$, where ${a = (n - 1) ^ {1/q}}$.
In the case ${n = 2}$ and ${p = \infty}$ we assume that inequality in
\eqref{eq:semilocal-theorem-initial-condition-second-kind-Weierstrass} is strict.
Then $f$ has only simple zeros and there exists a root-vector ${\xi \in \Kset^n}$ of $f$ such that
\begin{equation} \label{eq:semilocal-error-estimate}
  \|x - \xi\| \preceq \alpha (E_f(x)) \, \|W_f(x)\| \quad\text{and}\quad
	\left \| \frac{x - \xi}{d(x)} \right \|_p \le h(E_f(x)),
\end{equation}
where the real functions $\alpha$ and $h$ are defined by
\begin{equation} \label{eq:function-alpha-h-definition}
\alpha(t) = 2 / (1 - (a - 1) t + \sqrt{(1 - (a - 1) t)^2 - 4 t}\,) \quad\text{and}\quad h(t) = t \, \alpha(t).
\end{equation}
Moreover, if the inequality \eqref{eq:semilocal-theorem-initial-condition-second-kind-Weierstrass} is strict,
then  the second inequality in \eqref{eq:semilocal-error-estimate} is strict too.
\end{thm}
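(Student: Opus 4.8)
The plan is to (1) extract the elementary properties of the scalar functions $\alpha$ and $h$ from the hypothesis, (2) build the root-vector $\xi$ by a Rouch\'e-type counting argument based on the Weierstrass correction, and (3) sharpen the resulting crude localisation of $\xi$ to the estimates in \eqref{eq:semilocal-error-estimate} by means of the algebraic identity for $W_f$ and a bootstrapping argument. Write $t=E_f(x)$ and $a=(n-1)^{1/q}$. The first observation is that the hypothesis \eqref{eq:semilocal-theorem-initial-condition-second-kind-Weierstrass} is exactly the inequality $(1-(a-1)t)^2-4t\ge 0$, with equality precisely at $t=(1+\sqrt a)^{-2}$ (as the substitution $1+\sqrt a=1/u$, $t=u^2$ shows), so the radicand in \eqref{eq:function-alpha-h-definition} is nonnegative and $\alpha(t),h(t)$ are well-defined positive reals. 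Rationalising the radical shows that $\alpha(t)$ is the smaller root of $t\,y^2-(1-(a-1)t)y+1=0$, equivalently $h(t)=t\,\alpha(t)$ is the smaller root of $h^2-(1-(a-1)t)h+t=0$, so that
\[
1-(a-1)t-h(t)>0,\qquad \alpha(t)=\frac{1}{1-(a-1)t-h(t)},\qquad h(t)=\frac{t}{1-(a-1)t-h(t)}\,.
\]
From here one reads off the facts needed below: $\alpha(t)\ge 1$, and $h(t)\le (1+\sqrt a)^{-1}\le\tfrac12$, with $h(t)=\tfrac12$ only when $a=1$ and \eqref{eq:semilocal-theorem-initial-condition-second-kind-Weierstrass} holds with equality --- which is exactly what forces the strictness proviso in the case $n=2$, $p=\infty$.

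To produce $\xi$ I would apply a Rouch\'e-type lemma for polynomials over $\Kset$ (comparing numbers of zeros in a disk when $|f-g|<|g|$ on its boundary) on small circles around the components $x_i$. Fix $i$ and compare $f$ with the Weierstrass comparison polynomial $g_i(z)=a_0\bigl(z-T_i(x)\bigr)\prod_{j\ne i}(z-x_j)$, where $T_i(x)=x_i-W_i(x)$: then $g_i(x_i)=f(x_i)$, the remaining zeros of $g_i$ are the $x_j$ with $j\ne i$, and, matching the leading coefficient of $f$ with its Lagrange interpolant at $x_1,\dots,x_n$, one gets $f-g_i=\sum_{k\ne i}f(x_k)\ell_k$ (with $\ell_k$ the fundamental polynomials of the nodes), which vanishes at $x_i$. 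On a circle $|z-x_i|=r_i$ with $r_i$ taken just above $\alpha(t)|W_i(x)|$ --- hence $r_i<d_i(x)$, since $\alpha(t)|W_i(x)|\le h(t)\,d_i(x)<d_i(x)$ --- I would bound $|f(z)-g_i(z)|$ against $|g_i(z)|$ by writing every factor as $(z-x_j)/(x_i-x_j)$ and every coefficient through $W_k(x)/d_k(x)$, estimating the products $\prod_j|z-x_j|$ from above and below by $|x_i-x_j|$ times $1\pm r_i/d_i(x)$, and using H\"older's inequality to turn the sum over $k$ into the $p$-norm $\|W_f(x)/d(x)\|_p=t$ (this is where the exponent $1/q$ in $a$ enters). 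Because $r_i/d_i(x)$ may be taken just above $h(t)$ and $h(t)$ is the smaller root of the above quadratic, one obtains $|f(z)-g_i(z)|<|g_i(z)|$ on the circle; hence $f$ has in $\bar D(x_i,r_i)$ the same number of zeros as $g_i$, namely the single zero $T_i(x)$. Letting $r_i\downarrow\alpha(t)|W_i(x)|$ yields a zero $\xi_i$ of $f$ with $|x_i-\xi_i|\le\alpha(t)|W_i(x)|$. Since the radii satisfy $\alpha(t)|W_i(x)|\le h(t)\,d_i(x)\le h(t)|x_i-x_j|$ with $h(t)\le\tfrac12$ --- strictly, except in the excluded borderline --- the disks $\bar D(x_i,\alpha(t)|W_i(x)|)$ are pairwise disjoint, so $i\mapsto\xi_i$ is injective; as $f$ has exactly $n$ zeros counted with multiplicity, $\xi=(\xi_1,\dots,\xi_n)$ is a root-vector of $f$ and all zeros of $f$ are simple. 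This already gives the first inequality in \eqref{eq:semilocal-error-estimate}.

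For the second inequality, expanding $f(z)=a_0\prod_k(z-\xi_k)$ in \eqref{eq:Weierstrass-correction} yields the identity $W_i(x)=(x_i-\xi_i)\prod_{j\ne i}\frac{x_i-\xi_j}{x_i-x_j}$, hence $x_i-\xi_i=W_i(x)\prod_{j\ne i}\frac{x_i-x_j}{x_i-\xi_j}$. Estimating $|x_i-\xi_j|\ge|x_i-x_j|-|x_j-\xi_j|$, dividing by $d_i(x)$, using $\prod_{j\ne i}(1-\cdot)\ge 1-\sum_{j\ne i}(\cdot)$ together with H\"older's inequality (organised so as to keep track of the omitted index $i$), and finally taking $p$-norms over $i$, one arrives at a self-improving inequality which, writing $s=\|(x-\xi)/d(x)\|_p$, takes the form $s\le t/\bigl(1-(a-1)t-s\bigr)$. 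The crude bound from the previous paragraph places $s$ in the basin of the smaller fixed point of this map, and iterating drives $s$ down to $h(t)=t\,\alpha(t)$, which is the second estimate in \eqref{eq:semilocal-error-estimate}; tracing strictness through all the inequalities above gives the final assertion whenever \eqref{eq:semilocal-theorem-initial-condition-second-kind-Weierstrass} is strict.

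The genuinely delicate point --- and the main obstacle --- is to make the Rouch\'e estimate and the bootstrapping land exactly on the smaller root of the quadratic $h^2-(1-(a-1)t)h+t=0$: one must show that $E_f(x)\le(1+\sqrt a)^{-2}$ is not merely sufficient for ``some nearby root-vector exists'' but is precisely the threshold at which every intermediate inequality closes with the sharp constants $\alpha$ and $h$. This requires careful bookkeeping of which index is excluded in each use of H\"older's inequality and of the elementary product inequality, and of whether each inequality is strict; the borderline $n=2$, $p=\infty$ (where $a=1$, $h=\tfrac12$, the disks $\bar D(x_i,\alpha(t)|W_i(x)|)$ become mutually tangent and the optimal Rouch\'e circle degenerates) is exactly the configuration in which this care cannot be dispensed with, which is why that case is excluded from the non-strict statement.
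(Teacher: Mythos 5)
First, be aware that the paper does not prove this statement at all: Theorem~3.1 is imported verbatim from Proinov \cite{Pro15b}, where it is obtained from a semilocal convergence theorem for the Weierstrass iteration itself --- the root-vector $\xi$ arises as the limit of the Weierstrass sequence started at $x$, and the constants $\alpha(t)$ and $h(t)$ come from summing the series of successive corrections, which is exactly why they solve the quadratic $t y^{2}-(1-(a-1)t)y+1=0$. Your preliminary analysis of $\alpha$ and $h$ is correct and would be needed in any proof: the radicand vanishes precisely at $t=(1+\sqrt a)^{-2}$, $h(t)$ is the smaller root of $h^{2}-(1-(a-1)t)h+t=0$, and $h(t)\le(1+\sqrt a)^{-1}\le 1/2$. (One small slip: $a=(n-1)^{1/q}=1$ occurs not only for $n=2$ but also for $p=1$, so ``$a=1$ together with equality'' does not by itself explain why only the case $n=2$, $p=\infty$ is excluded.)

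The existence argument, however, has two genuine gaps. First, the theorem is stated over an arbitrary algebraically closed normed field $\Kset$, and a Rouch\'e-type zero-counting lemma on circles is an analytic fact about $\Cset$; you would need a substitute valid over $\Kset$ (the cited proof avoids this entirely by producing $\xi$ as a limit of the iteration, using only the norm axioms). Second, and more seriously, the step you yourself label ``the main obstacle'' --- making the estimates close at the sharp radius $\alpha(t)\,|W_i(x)|$ --- is not resolved, and the bounds you describe do not land on the right quadratic: estimating $\prod_{j\ne k}|z-x_j|$ from above and $\prod_{j\ne i}|z-x_j|$ from below by $|x_i-x_j|\bigl(1\pm r/d_i(x)\bigr)$ introduces factors $(1\pm h)^{n-2}$ and $(1-h)^{n-1}$ that are exponential in $n$ and absent from $h^{2}-(1-(a-1)t)h+t=0$, while the self-improving inequality one actually obtains from $|x_i-\xi_j|\ge|x_i-x_j|\bigl(1-|x_j-\xi_j|/d_j(x)\bigr)$, the product inequality and H\"older is $s\le t/(1-as)$, whose quadratic $as^{2}-s+t=0$ has real roots only for $t\le 1/(4a)$ --- a strictly stronger hypothesis than $t\le(1+\sqrt a)^{-2}$ whenever $a>1$. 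Note also that ``iterating drives $s$ down to $h(t)$'' is not a valid argument about a fixed number $s$: the correct logic is that $s\le t/(1-(a-1)t-s)$ forces $Q(s)\ge 0$, which yields $s\le h(t)$ only when combined with an a priori bound $s<H(t)$ for the larger root $H(t)$; and if the first estimate $|x_i-\xi_i|\le\alpha(t)|W_i(x)|$ were already established, the second would follow at once by dividing by $d_i(x)$ and taking $p$-norms, making the bootstrap superfluous. So the proposal is a reasonable programme, genuinely different from the source's fixed-point argument, but as written it does not establish the theorem with the stated constants.
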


In the sequel, we define the real function
\begin{equation}\label{eq:Omega}
\Omega(t) = \Psi(h(t))
\end{equation}
where $\Psi$ and $h$ are defined by \eqref{eq:Psi-Weierstrass} and \eqref{eq:function-alpha-h-definition}, respectively.

\medskip
Now we are in a position to state the main result of this paper.

%Theorem 3.2
\begin{thm} \label{thm:semilocal-theorem-Weierstrass-N}
Let ${f \in \Kset[z]}$ be a polynomial of degree ${n \ge 2}$, ${N \ge 1}$ and ${1 \le p \le \infty}$. Suppose ${x^{(0)} \in \Kset^n}$ is an initial guess with distinct components satisfying
\begin{equation}\label{eq:semilocal-theorem-Weierstrass-initial-condition}
E_f(x^{(0)}) \le 1 / (1 + \sqrt{a})^2 \quad\text{and}\quad \Omega(E_f(x^{(0)})) < 2,
\end{equation}
where the function ${E_f}$ is defined by \eqref{eq:FIC-third}, the function $\Omega$ is defined by \eqref{eq:Omega} and
${a = (n - 1) ^ {1/q}}$.
In the case ${n = 2}$ and ${p = \infty}$ we assume that the first inequality in
\eqref{eq:semilocal-theorem-Weierstrass-initial-condition} is strict.
Then $f$ has only simple zeros in $\Kset$ and
the Weierstrass-type iteration \eqref{eq:Weierstrass-high-order} is well-defined and converges to a root-vector $\xi$ of $f$
with order of convergence ${N + 1}$ and with error estimate
\begin{equation} \label{eq:semilocal-error-estimate-Weierstrass}
  \|x^{(k)} - \xi\| \preceq \alpha (E_f(x^{(k)})) \, \|W_f(x^{(k)})\|,
\end{equation}
for all ${k \ge 0}$ such that ${E_f(x^{(k)}) \le 1/ (1 + \sqrt{a})^2}$ and ${\Omega(E_f(x^{(k)})) < 2}$,
where the function $\alpha$ is defined by \eqref{eq:function-alpha-h-definition}.
\end{thm}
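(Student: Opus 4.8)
The plan is to obtain this semilocal statement from the two results already available: the existence theorem (Theorem~\ref{prop:semilocal-theorem-second-type-Weierstrass}), which turns a computationally verifiable bound on $E_f$ into the existence of a nearby root-vector together with a bound on the second-kind function of initial conditions $E$ of \eqref{eq:FIC-second-kind}, and the local convergence theorem (Theorem~\ref{thm:second-local-Weierstrass}), which governs the iteration as soon as $\Psi(E(x^{(0)})) \le 2$. This is exactly the ``semilocal via local'' scheme of Proinov~\cite{Pro15b}, and the definition $\Omega = \Psi \circ h$ in \eqref{eq:Omega} is tailored to make the two results dovetail.

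First I would apply Theorem~\ref{prop:semilocal-theorem-second-type-Weierstrass} to $x^{(0)}$. The first condition in \eqref{eq:semilocal-theorem-Weierstrass-initial-condition} is precisely its hypothesis (strict when $n = 2$, $p = \infty$, as assumed), so $f$ has only simple zeros in $\Kset$ and there is a root-vector $\xi$ of $f$ with $\|(x^{(0)} - \xi)/d(x^{(0)})\|_p \le h(E_f(x^{(0)}))$; writing $E(x^{(0)})$ for the left-hand side, i.e. taking $E$ in \eqref{eq:FIC-second-kind} relative to this $\xi$, and using that $\Psi$ is increasing, the second condition in \eqref{eq:semilocal-theorem-Weierstrass-initial-condition} gives
\[
\Psi(E(x^{(0)})) \le \Psi(h(E_f(x^{(0)}))) = \Omega(E_f(x^{(0)})) < 2,
\]
which is the strict form of the initial condition \eqref{eq:second-local-Weierstrass-initial-condition}. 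Theorem~\ref{thm:second-local-Weierstrass} then yields at once that the Weierstrass-type iteration \eqref{eq:Weierstrass-high-order} is well-defined and converges to $\xi$, and the strictness gives order of convergence $N + 1$.

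It remains to establish the a posteriori estimate \eqref{eq:semilocal-error-estimate-Weierstrass}. I would fix an index $k$ for which $E_f(x^{(k)}) \le 1/(1 + \sqrt a)^2$ and $\Omega(E_f(x^{(k)})) < 2$ and apply Theorem~\ref{prop:semilocal-theorem-second-type-Weierstrass} once more, now to $x^{(k)}$ (which lies in $\mathcal{D}$ since $x^{(k)} \in D_N$); this produces a root-vector $\eta$ of $f$ with
\[
\|x^{(k)} - \eta\| \preceq \alpha(E_f(x^{(k)})) \, \|W_f(x^{(k)})\| \quad\text{and}\quad \left\|\frac{x^{(k)} - \eta}{d(x^{(k)})}\right\|_p \le h(E_f(x^{(k)})).
\]
The crux — and the step I expect to need the most care — is to identify $\eta$ with the limit $\xi$ of the original iteration. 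The key observation is that \eqref{eq:Weierstrass-high-order} is an autonomous fixed-point iteration for $T^{(N)}$, so the tail $(x^{(k+m)})_{m \ge 0}$ is itself the Weierstrass-type iteration started at $x^{(k)}$. Repeating the computation of the previous paragraph with $x^{(k)}$ and $\eta$ in place of $x^{(0)}$ and $\xi$ — the second displayed bound again forces the corresponding value of $\Psi$ to be $\le \Omega(E_f(x^{(k)})) < 2$ — Theorem~\ref{thm:second-local-Weierstrass} shows that this tail converges to $\eta$. Since it also converges to $\xi$ by the previous step, uniqueness of the limit gives $\eta = \xi$, and the first displayed inequality becomes \eqref{eq:semilocal-error-estimate-Weierstrass}. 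A small technical point to watch is the case $n = 2$, $p = \infty$, where Theorem~\ref{prop:semilocal-theorem-second-type-Weierstrass} requires the bound on $E_f(x^{(k)})$ to be strict; this is handled either by restricting the claim to those $k$ where the bound is strict or by propagating strictness from the initial hypothesis along the iterates.
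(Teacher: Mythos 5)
Your proposal is correct and follows essentially the same route as the paper: apply Theorem~\ref{prop:semilocal-theorem-second-type-Weierstrass} to $x^{(0)}$ to produce $\xi$ with $\Psi(E(x^{(0)}))\le\Omega(E_f(x^{(0)}))<2$, invoke Theorem~\ref{thm:second-local-Weierstrass} for convergence of order $N+1$, and then re-apply the existence theorem at each admissible $x^{(k)}$ and identify the resulting $\eta$ with $\xi$ by uniqueness of the limit of the tail iteration. Your remark about propagating strictness in the case $n=2$, $p=\infty$ is a point the paper's own proof glosses over, so no gap on your side.
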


\begin{proof}
From the first inequality in \eqref{eq:semilocal-theorem-Weierstrass-initial-condition} and
Theorem~\ref{prop:semilocal-theorem-second-type-Weierstrass}, we conclude that
$f$ has only simple zeros and there exists a root-vector ${\xi \in \Kset^n}$ of $f$ such that
\[
\left \| \frac{x^{(0)} - \xi}{d(x^{(0)})} \right \|_p \le h(E_f(x^{(0)})).
\]
From this and the second inequality in \eqref{eq:semilocal-theorem-Weierstrass-initial-condition},
taking into account that $\Psi$ is increasing on ${[0,+\infty)}$, we obtain
\[
 \Psi\left(\left \| \frac{x^{(0)} - \xi}{d(x^{(0)})} \right \|_p  \right) \le \Psi (h(E_f(x^{(0)}))) =  \Omega(E_f(x^{(0)})) <  2.
\]
It follows from Theorem~\ref{thm:second-local-Weierstrass}
that the Weierstrass-type iteration \eqref{eq:Weierstrass-high-order} is well-defined and converges to $\xi$ with order of convergence
${N + 1}$. It remains to prove the estimate \eqref{eq:semilocal-error-estimate-Weierstrass}.
Suppose that for some ${k \ge 0}$,
\begin{equation}\label{eq:semilocal-theorem-Weierstrass-initial-condition-k}
E_f(x^{(k)}) \le 1 / (1 + \sqrt{a})^2 \quad\text{and}\quad \Omega(E_f(x^{(k)})) <  2.
\end{equation}
Then it follows from the first inequality in \eqref{eq:semilocal-theorem-Weierstrass-initial-condition-k} and
Theorem~\ref{prop:semilocal-theorem-second-type-Weierstrass} that there exists a root-vector
${\eta\in \Kset^n}$ of $f$ such that
\begin{equation} \label{eq:semilocal-error-estimate-k}
  \|x^{(k)} - \eta\| \preceq \alpha (E_f(x^{(k)})) \, \|W_f(x^{(k)})\| \quad\text{and}\quad
	\left \| \frac{x^{(k)} - \eta}{d(x^{(k)})} \right \|_p \le h(E_f(x^{(k)})).
\end{equation}
From the second inequality in \eqref{eq:semilocal-error-estimate-k} and the second inequality in
\eqref{eq:semilocal-theorem-Weierstrass-initial-condition-k}, we get
\[
 \Psi\left(\left \| \frac{x^{(k)} - \eta}{d(x^{(k)})} \right \|_p  \right) \le \Psi (h(E_f(x^{(k)}))) =  \Omega(E_f(x^{(k)})) <  2.
\]
By Theorem~\ref{thm:second-local-Weierstrass}, we conclude that the iteration
\eqref{eq:Weierstrass-high-order} converges to $\eta$. By the uniqueness of the limit, ${\eta = \xi}$.
Hence, the error estimate \eqref{eq:semilocal-error-estimate-Weierstrass} follows from the first inequality in
\eqref{eq:semilocal-error-estimate-k}.
\end{proof}

%Corollary 3.3
\begin{cor} \label{cor:semilocal-theorem-Weierstrass-first}
Let ${f \in \Kset[z]}$ be a polynomial of degree ${n \ge 2}$ and ${1 \le p \le \infty}$. Suppose ${x^{(0)} \in \Kset^n}$ is an initial guess with distinct components such that
\begin{equation}\label{eq:cor-semilocal-theorem-Weierstrass-initial-condition-third}
E_f(x^{(0)}) = \left \| \frac{W_f(x^{(0)})}{d(x^{(0)})} \right \|_p \le \frac{2}{5 a + 6} \,,
\end{equation}
where ${a = (n - 1) ^ {1/q}}$.
Then $f$ has only simple zeros in $\Kset$ and the Weierstrass-type iteration \eqref{eq:Weierstrass-high-order} is well-defined and converges to a root-vector of $f$ with order of convergence ${N + 1}$ and with error estimate
\eqref{eq:semilocal-error-estimate-Weierstrass}.
\end{cor}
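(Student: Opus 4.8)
The plan is to read off Corollary~\ref{cor:semilocal-theorem-Weierstrass-first} from Theorem~\ref{thm:semilocal-theorem-Weierstrass-N}: I must show that the explicit bound $t_0 := 2/(5a+6)$, with $a = (n-1)^{1/q}$, forces both inequalities in \eqref{eq:semilocal-theorem-Weierstrass-initial-condition}. Since $E_f(x^{(0)}) \le t_0$ and the functions involved (namely $\Psi$, the function $h$ of \eqref{eq:function-alpha-h-definition}, and hence $\Omega = \Psi\circ h$) are nondecreasing on the relevant intervals, it is enough to verify (a) $t_0 \le 1/(1+\sqrt a)^2$ and (b) $\Omega(t_0) < 2$ for every admissible value of $a$, i.e. for all $a \ge 1$; the conclusion then transfers to $E_f(x^{(0)})$ by monotonicity.

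For (a) I would argue purely algebraically. The inequality $t_0 \le 1/(1+\sqrt a)^2$ is equivalent to $2(1+\sqrt a)^2 \le 5a+6$, that is, to $3a - 4\sqrt a + 4 \ge 0$; writing $s = \sqrt a$ this is the quadratic $3s^2 - 4s + 4$, whose discriminant $16 - 48$ is negative, so it is strictly positive for all $s$. Hence in fact $t_0 < 1/(1+\sqrt a)^2$ strictly. In particular, in the exceptional case $n = 2$, $p = \infty$ (where $a = 1$ and $t_0 = 2/11 < 1/4$) the \emph{strict} form of the first inequality required in Theorem~\ref{thm:semilocal-theorem-Weierstrass-N} is automatically satisfied, so no extra hypothesis is needed in the corollary.

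For (b) I would first compute $h(t_0)$ in closed form. From \eqref{eq:function-alpha-h-definition}, evaluating the discriminant at $t_0$ gives $(1-(a-1)t_0)^2 - 4t_0 = (9a^2 + 8a + 16)/(5a+6)^2 \ge 0$, whence
\[
h(t_0) = t_0\,\alpha(t_0) = \frac{4}{\,3a + 8 + \sqrt{9a^2 + 8a + 16}\,}.
\]
This is a decreasing function of $a$ on $[1,\infty)$, so $h(t_0) \le s^* := 4/(11+\sqrt{33})$. On the other hand, since $(n-1)^p \ge n-1$ for $p \ge 1$ and $(1+s/(n-1))^{n-1} \le e^s$, we have the uniform bound $\Psi(s) \le (1+2s)e^s$ for all $n \ge 2$ and all $1 \le p \le \infty$; combined with monotonicity of $\Psi$ this gives $\Omega(t_0) = \Psi(h(t_0)) \le \Psi(s^*) \le (1 + 2s^*)e^{s^*}$. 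It then remains only to check the single numerical inequality $(1+2s^*)e^{s^*} < 2$ (with $s^* \approx 0.239$ the value is $\approx 1.88$), which completes (b).

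Having established both inequalities of \eqref{eq:semilocal-theorem-Weierstrass-initial-condition} — the first one strictly, hence a fortiori in the degenerate case — Theorem~\ref{thm:semilocal-theorem-Weierstrass-N} applies directly and yields the assertion: $f$ has only simple zeros in $\Kset$, and the Weierstrass-type iteration \eqref{eq:Weierstrass-high-order} is well-defined and converges to a root-vector of $f$ with order of convergence $N+1$ and with the error estimate \eqref{eq:semilocal-error-estimate-Weierstrass}. The main obstacle is step (b): one must keep in mind that $\Omega$ depends genuinely on $n$ and $p$ (through the factor $(1+t/(n-1)^p)^{n-1}$ in $\Psi$), not merely on $a$, so the estimate has to be made uniform in $n$ and $p$ before it can be reduced to a one-variable numerical check; the bound $\Psi(s)\le(1+2s)e^s$ does exactly this. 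Note that trying to route the argument through the explicit local radius $n(2^{1/n}-1)/(a+2)$ from \eqref{eq:R-second-local-inequality} would not work, since $h(t_0)$ can exceed that radius when $p = 1$ and $n$ is large — one really needs $\Psi(h(t_0)) < 2 = \Psi(R)$ estimated directly.
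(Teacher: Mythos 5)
Your overall strategy is the paper's: reduce everything to Theorem~\ref{thm:semilocal-theorem-Weierstrass-N}, compute $h(t_0)$ in closed form (your expression $4/(3a+8+\sqrt{9a^2+8a+16})$ is exactly the paper's $g(a)$), apply $(1+x/m)^m\le e^x$, and finish with a one-variable numerical check. Your part (a) is correct and is actually more explicit than the paper, which says ``We prove only $\Omega(R)<2$'' and omits the verification of $R\le\mu$; your observation that $3s^2-4s+4$ has negative discriminant fills that in cleanly and also disposes of the strictness needed when $n=2$, $p=\infty$.

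Step (b), however, contains a genuine gap. You bound $\Psi(s)\le(1+2s)e^s$ uniformly in $n$ and $p$ by invoking $(n-1)^p\ge n-1$, i.e.\ you read the exponent in \eqref{eq:Psi-Weierstrass} literally as $p$. That exponent is in fact $1/p$: this is forced by the proof of Lemma~\ref{lem:Weierstrass-iterated-contraction-inequalities-second-FIC} (where $(1+\|u\|_p/(n-1)^{1/p})^{n-1}-1$ must coincide with $\omega_N(E(x))-1$), by the constant $(n-1)^{1/q}$ in \eqref{eq:R-second-local-inequality}, by the claimed reduction to \cite[Theorem~7.3]{Pro15a} when $p=\infty$ (where $(n-1)^p$ is not even finite), and by the paper's own evaluation $\Omega(R)=(1+2g(a))\left(1+ a g(a)/(n-1)\right)^{n-1}$, which uses $1/(n-1)^{1/p}=a/(n-1)$. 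Under that definition your uniform bound is false: for $p=\infty$ one has $\Psi(s)=(1+2s)(1+s)^{n-1}$, so $\Psi(s^*)\to\infty$ as $n\to\infty$, and the decoupling ``first bound $h(t_0)\le s^*$ by its value at $a=1$, then bound $\Psi(s^*)$ independently of $n$'' cannot be repaired. The cancellation you must exploit is that $h(t_0)=g(a)$ shrinks like $2/(3a)$ exactly as $a=(n-1)^{1/q}$ grows, so the two factors have to be estimated together: $\Omega(t_0)=(1+2g(a))\left(1+ a g(a)/(n-1)\right)^{n-1}\le(1+2g(a))e^{a g(a)}=G(a)$, and the remaining task is to show $G(a)<2$ for all admissible $a$. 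Note the margin is thinner than your computation suggests: $G(a)\to e^{2/3}\approx 1.948$ as $a\to\infty$, not $1.877$. That final one-variable inequality is the real content of the paper's proof (its function $G$ and Fig.~1).
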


\begin{proof}
The proof follows from Theorem~\ref{thm:semilocal-theorem-Weierstrass-N} because the initial condition
\eqref{eq:cor-semilocal-theorem-Weierstrass-initial-condition-third} implies
\eqref{eq:semilocal-theorem-Weierstrass-initial-condition}.
For simplicity, we set ${R = 2/(5a + 6)}$ and ${\mu = 1/(1 + \sqrt{a})^2}$.
To prove that $x^{(0)}$ satisfies \eqref{eq:semilocal-theorem-Weierstrass-initial-condition} it is sufficient to show that
${R \le \mu}$  and  ${\Omega(R) < 2}$, since $\Omega$ is increasing on $[0,+\infty]$. 
We prove only ${\Omega(R) < 2}$. 
We define the function ${G \colon \Rset_+ \to \Rset_+}$  
(the graph of $G$ is given in Fig.~\ref{fig:graphic-psitilde})
by:
\[
G(t) = (1 + 2 g(t)) e^{t g(t)}, \quad\text{where}\quad  g(t) =  4 / (3 t + 8 + \sqrt{9 t^2 + 8 t + 16}\,).
\]
%%%%%%%%%%%%%%%%%%%%%%%%%%%%%%%%%%%%%%%%%%%%%%%%%%
%%           Figure 1
%%%%%%%%%%%%%%%%%%%%%%%%%%%%%%%%%%%%%%%%%%%%%%%%%%
\begin{figure}[!ht]
    \centering
%  \framebox{\includegraphics[scale=1.0]{Figure1.pdf}}
 \includegraphics[scale=0.88]{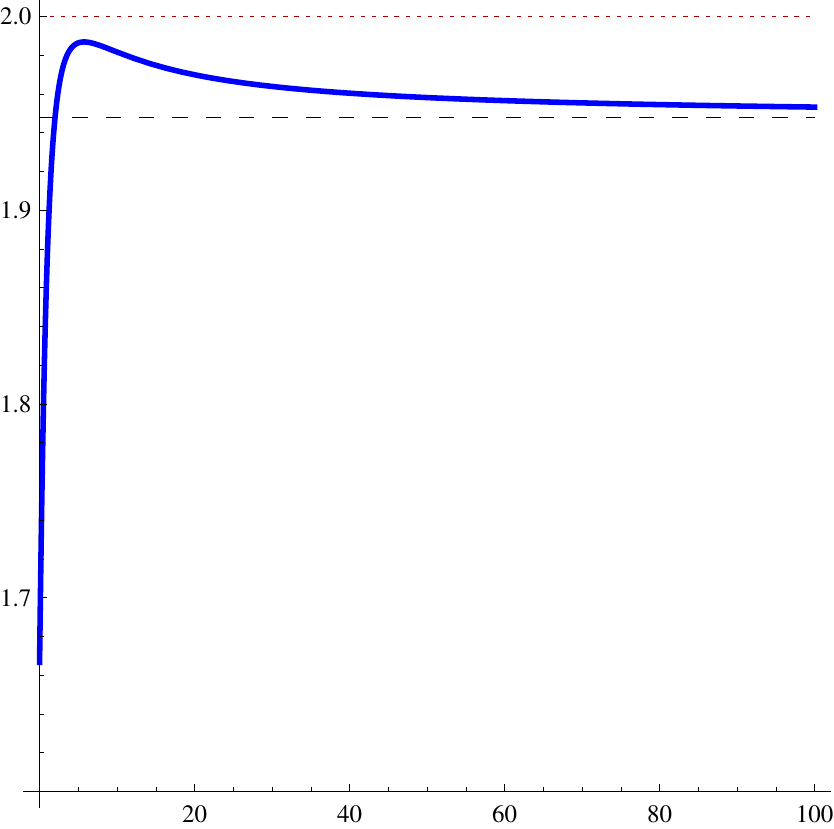}
\caption{Graph of the function $G$ on $\Rset_+$.}
    \label{fig:graphic-psitilde}
    \end{figure}
\FloatBarrier
\noindent
By using standard arguments of calculus, it is easy to prove that ${G(t) < 2}$ for all ${t \in \Rset_+}$ .
It follows from the definition of $\Omega$ and the well-known inequality ${\left( 1 + t / n \right)^n \le e^t}$ ${(t \in \Rset_+)}$ that
\begin{equation}
	\Omega(R) = (1 + 2 g(a)) \left(1 + \frac{a g(a)}{n - 1} \right)^{n - 1} \le G(a) < 2
\end{equation}
which completes the proof of the corollary.
\end{proof}

\subsection{Second semilocal convergence theorem}
\label{subsec:Second-semilocal-theorem-Weierstrass}

In this section we need another theorem of Proinov \cite{Pro15b} that enables us to transform local convergence theorems into semilocal ones.

%Theorem 3.4
\begin{thm}[Proinov \cite{Pro15b}] \label{prop:semilocal-theorem-second-kind}
Let ${f \in \Kset[z]}$ be a monic polynomial of degree ${n \ge 2}$. Suppose the initial guess ${x \in \Kset^n}$ with distinct components such that
\begin{equation}\label{eq:semilocal-theorem-initial-condition-second-kind}
   \left \| \frac{W_f(x)}{d(x)} \right \|_p \le \frac{R (1 - R)}{1 + (a - 1) R}
\end{equation}
for some ${1 \le p \le \infty}$ and ${0 < R \le 1/(1 + \sqrt{a})}$, where ${a = (n - 1) ^ {1/q}}$.
In the case ${n = 2}$ and ${p = \infty}$ we assume that inequality in
\eqref{eq:semilocal-theorem-initial-condition-second-kind} is strict.
Then $f$ has only simple zeros and there exists a root-vector ${\xi \in \Kset^n}$ of $f$ such that
\begin{equation} \label{eq:semilocal-error-estimate-initial-condition}
	\|x - \xi\| \preceq \alpha (E_f(x)) \, \|W(x)\| \quad\text{ and }\quad
	\left \| \frac{x - \xi}{d(x)} \right \|_p \le R,
\end{equation}	
where the real function $\alpha$ is defined as in \eqref{eq:function-alpha-h-definition}.
If the inequality \eqref{eq:semilocal-theorem-initial-condition-second-kind} is strict,
then the second inequality in \eqref{eq:semilocal-error-estimate-initial-condition} is strict too.
\end{thm}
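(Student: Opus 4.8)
The plan is to deduce this theorem from the already-stated Theorem~\ref{prop:semilocal-theorem-second-type-Weierstrass}. The guiding observation is that, for every admissible parameter $R$, the hypothesis \eqref{eq:semilocal-theorem-initial-condition-second-kind} is precisely the condition that forces the a posteriori bound $h(E_f(x))$ supplied by that theorem to satisfy $h(E_f(x)) \le R$.

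First I would record two elementary facts about the scalar function $\psi$ defined by $\psi(R) = R(1-R)/(1+(a-1)R)$, which appears on the right of \eqref{eq:semilocal-theorem-initial-condition-second-kind}. Computing $\psi'$, its numerator equals $1 - 2R - (a-1)R^{2}$, which is positive on $(0,\,1/(1+\sqrt a))$ and vanishes at the right endpoint; hence $\psi$ is strictly increasing on $[0,\,1/(1+\sqrt a)]$ with maximum value $\psi(1/(1+\sqrt a)) = 1/(1+\sqrt a)^{2}$. In particular, for $0 < R \le 1/(1+\sqrt a)$ the hypothesis $E_f(x) \le \psi(R)$ implies $E_f(x) \le 1/(1+\sqrt a)^{2}$, so Theorem~\ref{prop:semilocal-theorem-second-type-Weierstrass} applies — with the same strictness proviso in the case $n = 2$, $p = \infty$, where $a = 1$. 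It yields that $f$ has only simple zeros and that there is a root-vector $\xi$ of $f$ with $\|x-\xi\| \preceq \alpha(E_f(x))\,\|W_f(x)\|$ and $\|(x-\xi)/d(x)\|_{p} \le h(E_f(x))$; the first of these is already the first estimate in \eqref{eq:semilocal-error-estimate-initial-condition}.

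It then remains to show $h(E_f(x)) \le R$. Rationalizing the formula for $\alpha$ in \eqref{eq:function-alpha-h-definition} gives the closed form
\[
h(t) = t\,\alpha(t) = \tfrac12\Bigl( 1 - (a-1)t - \sqrt{(1-(a-1)t)^{2} - 4t}\,\Bigr),
\]
the radicand being nonnegative since $t \le 1/(1+\sqrt a)^{2}$. Thus $h(t) \le R$ is equivalent to $1-(a-1)t-2R \le \sqrt{(1-(a-1)t)^{2}-4t}$, which is immediate when the left-hand side is $\le 0$ and otherwise, after squaring and cancelling, reduces exactly to $t\,(1+(a-1)R) \le R(1-R)$, i.e. to the hypothesis $t \le \psi(R)$. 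Therefore $\|(x-\xi)/d(x)\|_{p} \le h(E_f(x)) \le R$, which is the second estimate in \eqref{eq:semilocal-error-estimate-initial-condition}. If \eqref{eq:semilocal-theorem-initial-condition-second-kind} is strict, then $t < \psi(R)$, hence $t < 1/(1+\sqrt a)^{2}$ and the radicand is strictly positive, so the same case analysis yields $h(t) < R$; this makes the second estimate strict as claimed.

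There is really no deep obstacle here: the whole argument is a reduction to Theorem~\ref{prop:semilocal-theorem-second-type-Weierstrass} plus the elementary analysis of $\psi$. The one point requiring a little care is the sign bookkeeping for $1-(a-1)t-2R$ before squaring in the last step, and, for the strictness part, checking that a vanishing radicand cannot coincide with $1-(a-1)t-2R = 0$ except at the corner $t = 1/(1+\sqrt a)^{2}$, $R = 1/(1+\sqrt a)$. Incidentally, the monic hypothesis on $f$ plays no role, since $W_f$ in \eqref{eq:Weierstrass-correction} already carries the leading coefficient.
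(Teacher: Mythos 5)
The paper does not actually prove this statement: it is imported verbatim from Proinov \cite{Pro15b} and used as a black box, so there is no in-paper argument to compare yours against. Judged on its own, your derivation from Theorem~\ref{prop:semilocal-theorem-second-type-Weierstrass} is correct and complete. The two computations on which everything rests check out: the numerator of $\psi'$ is indeed $1-2R-(a-1)R^{2}$, whose positive root is $(\sqrt{a}-1)/(a-1)=1/(1+\sqrt{a})$, so $\psi$ increases on $[0,1/(1+\sqrt a)]$ up to the value $1/(1+\sqrt a)^{2}$ (using $1+(a-1)/(1+\sqrt a)=\sqrt a$); and the rationalized form $h(t)=\tfrac12\bigl(1-(a-1)t-\sqrt{(1-(a-1)t)^{2}-4t}\bigr)$ turns $h(t)\le R$ into exactly $t\bigl(1+(a-1)R\bigr)\le R(1-R)$ after the squaring step, with the radicand $(a-1)^{2}t^{2}-(2a+2)t+1$ having smaller root $1/(1+\sqrt a)^{2}$, so it is nonnegative on the relevant range and strictly positive for $t<1/(1+\sqrt a)^{2}$. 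Your sign bookkeeping before squaring and your treatment of the borderline case $1-(a-1)t-2R=0$ in the strict version are both handled adequately (when that quantity is $\le 0$ one has $h(t)\le\tfrac12(1-(a-1)t)\le R$ directly, strictly once the radicand is positive). The one thing worth stating explicitly rather than leaving implicit is the strictness transfer in the case $n=2$, $p=\infty$: there Theorem~\ref{prop:semilocal-theorem-second-type-Weierstrass} needs a strict inequality, and you get it either because $R<1/(1+\sqrt a)$ forces $\psi(R)<1/(1+\sqrt a)^{2}$, or, when $R=1/(1+\sqrt a)$, because the present theorem's hypothesis is itself assumed strict in that case. Your closing remark that the monic hypothesis is inert is also accurate, since $W_f$ in \eqref{eq:Weierstrass-correction} already divides by the leading coefficient.
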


%Theorem 3.5
\begin{thm} \label{thm:semilocal-theorem-Weierstrass}
Let ${f \in \Kset[z]}$ be a polynomial of degree ${n \ge 2}$, ${N \ge 1}$ and ${1 \le p \le \infty}$. Suppose ${x^{(0)} \in \Kset^n}$ is an initial guess with distinct components such that
\begin{equation} \label{eq:cor-semilocal-theorem-Weierstrass-initial-condition}
E_f(x^{(0)}) =  \left \| \frac{W_f(x^{(0)})}{d(x^{(0)})} \right \|_p < \mathcal{R} =
\frac{n (\sqrt[n]{2} - 1) (a + 2 - n (\sqrt[n]{2} - 1))}{(a + 2) ( a + 2 + n (a - 1)(\sqrt[n]{2} - 1))} \,,
\end{equation}
where ${a = (n - 1) ^ {1/q}}$.
Then $f$ has only simple zeros in $\Kset$ and the Weierstrass-type iteration \eqref{eq:Weierstrass-high-order} is well-defined and converges to a root-vector of $f$ with order ${N + 1}$ and with error estimate
\begin{equation} \label{eq:semilocal-error-estimate-Weierstrass-2}
  \|x^{(k)} - \xi\| \preceq \alpha (E_f(x^{(k)})) \, \|W_f(x^{(k)})\|,
\end{equation}
for all ${k \ge 0}$ such that ${E_f(x^{(k)}) \le \mathcal{R}}$,
where the function $\alpha$ is defined by \eqref{eq:function-alpha-h-definition}.
\end{thm}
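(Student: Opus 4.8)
The plan is to derive Theorem~\ref{thm:semilocal-theorem-Weierstrass} from the local convergence result Theorem~\ref{thm:second-local-Weierstrass} together with Proinov's transfer principle Theorem~\ref{prop:semilocal-theorem-second-kind}, in complete analogy with the way Theorem~\ref{thm:semilocal-theorem-Weierstrass-N} was obtained from Theorem~\ref{thm:second-local-Weierstrass} and Theorem~\ref{prop:semilocal-theorem-second-type-Weierstrass}. Since passing from $f$ to $f/a_0$ changes neither $d$, nor $W_f$, nor $E_f$, nor the iteration function $T^{(N)}$, we may assume throughout that $f$ is monic, so that Theorem~\ref{prop:semilocal-theorem-second-kind} is applicable. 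The essential step is to invoke Theorem~\ref{prop:semilocal-theorem-second-kind} with its free parameter (denoted $R$ there) set equal to
\[
r := \frac{n(2^{1/n} - 1)}{(n-1)^{1/q} + 2} = \frac{s}{a + 2}, \qquad \text{where } s := n(2^{1/n} - 1) \text{ and } a := (n-1)^{1/q},
\]
that is, the lower estimate occurring in \eqref{eq:R-second-local-inequality}.

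First I would verify the two facts needed to apply Theorem~\ref{prop:semilocal-theorem-second-kind} with $R = r$. \emph{(a) Admissibility}, i.e. $0 < r \le 1/(1 + \sqrt a)$: the quantity $s = n(2^{1/n}-1)$ is decreasing in $n$, so $s \le 2(\sqrt 2 - 1) < 1$ for every integer $n \ge 2$; moreover $a \ge 1$ always and $\sqrt a \le a + 1$, hence $1 + \sqrt a \le a + 2$. Therefore $s(1 + \sqrt a) < 1 + \sqrt a \le a + 2$, which is precisely $r < 1/(1 + \sqrt a)$. \emph{(b) The identity} $\dfrac{r(1 - r)}{1 + (a-1) r} = \mathcal{R}$: substituting $r = s/(a+2)$ and clearing denominators yields $\dfrac{r(1-r)}{1 + (a-1)r} = \dfrac{s(a + 2 - s)}{(a+2)\,(a + 2 + (a-1)s)}$, and since $n(a-1)(2^{1/n} - 1) = (a - 1)s$ this is exactly the constant $\mathcal{R}$ of \eqref{eq:cor-semilocal-theorem-Weierstrass-initial-condition}; in particular $\mathcal{R} > 0$.

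With (a) and (b) available the argument runs as in the proof of Theorem~\ref{thm:semilocal-theorem-Weierstrass-N}. The hypothesis \eqref{eq:cor-semilocal-theorem-Weierstrass-initial-condition} says exactly that \eqref{eq:semilocal-theorem-initial-condition-second-kind} holds with $R = r$ and with strict inequality (so the borderline case $n = 2$, $p = \infty$ is covered automatically). Hence Theorem~\ref{prop:semilocal-theorem-second-kind} yields that $f$ has only simple zeros and that there is a root-vector $\xi$ of $f$ with $\|x^{(0)} - \xi\| \preceq \alpha(E_f(x^{(0)}))\,\|W_f(x^{(0)})\|$ and $\big\|(x^{(0)} - \xi)/d(x^{(0)})\big\|_p \le r$. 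Because $r$ is strictly less than the positive root of $\Psi(t) = 2$ (again by \eqref{eq:R-second-local-inequality}) and $\Psi$ is increasing, it follows that $\Psi(E(x^{(0)})) \le \Psi(r) < 2$, where $E$ is the function \eqref{eq:FIC-second-kind} associated with this $\xi$. Thus the initial condition \eqref{eq:second-local-Weierstrass-initial-condition} of Theorem~\ref{thm:second-local-Weierstrass} holds with strict inequality, and that theorem gives that \eqref{eq:Weierstrass-high-order} is well-defined and converges to $\xi$ with order of convergence $N + 1$.

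It remains to establish the a posteriori estimate \eqref{eq:semilocal-error-estimate-Weierstrass-2}. Fix $k \ge 0$ with $E_f(x^{(k)}) \le \mathcal{R}$ and apply Theorem~\ref{prop:semilocal-theorem-second-kind} (with $R = r$) to the vector $x^{(k)}$: this produces a root-vector $\eta$ of $f$ with $\|x^{(k)} - \eta\| \preceq \alpha(E_f(x^{(k)}))\,\|W_f(x^{(k)})\|$ and $\big\|(x^{(k)} - \eta)/d(x^{(k)})\big\|_p \le r$. As before $\Psi\big(\|(x^{(k)} - \eta)/d(x^{(k)})\|_p\big) \le \Psi(r) < 2$, so by Theorem~\ref{thm:second-local-Weierstrass} the iteration \eqref{eq:Weierstrass-high-order} also converges to $\eta$; since it already converges to $\xi$, uniqueness of the limit forces $\eta = \xi$, and \eqref{eq:semilocal-error-estimate-Weierstrass-2} follows. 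I do not foresee any conceptual obstacle; the only genuinely delicate part will be the bookkeeping in (a) and (b) — in particular, identifying the exact value $r$ of the free parameter that makes the unwieldy constant $\mathcal{R}$ appear — together with keeping careful track of which inequalities must be strict (for the order of convergence and for the $n = 2$, $p = \infty$ borderline case).
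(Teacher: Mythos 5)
Your proposal is correct and follows essentially the same route as the paper: choose the parameter $R$ in Theorem~\ref{prop:semilocal-theorem-second-kind} to be $n(2^{1/n}-1)/(a+2)$, check admissibility and the algebraic identity turning the bound into $\mathcal{R}$, then feed the resulting root-vector into the local theory (the paper cites Corollary~\ref{cor:second-local-Weierstrass}, which is exactly your combination of Theorem~\ref{thm:second-local-Weierstrass} with the lower estimate in \eqref{eq:R-second-local-inequality}), and finish the a posteriori estimate by the same uniqueness-of-limit argument. Your explicit reduction to the monic case is a small point of care the paper leaves implicit.
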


\begin{proof}
Let ${R = (n(\sqrt[n]{2} - 1)) /(a + 2)}$. From the well-known inequality ${n (\sqrt[n]{2} - 1) < 1}$,
we obtain ${R <  1 / (1 + \sqrt{a})}$. On the other hand
\[
 \frac{R (1 - R)}{1 + (a - 1)R}  =
\frac{n (\sqrt[n]{2} - 1) (a + 2 - n (\sqrt[n]{2} - 1))}{(a + 2) ( a + 2 + n (a - 1)(\sqrt[n]{2} - 1))} \, .
\]
Therefore, \eqref{eq:cor-semilocal-theorem-Weierstrass-initial-condition} can be written in the form
\[
 \left \| \frac{W(x^{(0)})}{d(x^{(0)})} \right \|_p < \frac{R (1 - R)}{1 + (a - 1)R} \, .
\]
Then it follows from Theorem~\ref{prop:semilocal-theorem-second-kind} that $f$ has only simple zeros in $\Kset$ and
there exists a root-vector ${\xi \in \Kset^n}$ of $f$ such that
\[
\left \| \frac{x^{(0)} - \xi}{d(x^{(0)})} \right \|_p < R.
\]
Now Corollary~\ref{cor:second-local-Weierstrass} implies that the Weierstrass-type iteration \eqref{eq:Weierstrass-high-order} converges
to $\xi$ with order of convergence ${N + 1}$.
It remains to prove the error estimate \eqref{eq:semilocal-error-estimate-Weierstrass-2}.
Suppose that for some ${k \ge 0}$,
\begin{equation}\label{eq:semilocal-theorem-Weierstrass-corollary-initial-condition-k}
   E_f(x^{(k)}) = \left \| \frac{W_f(x^{(k)})}{d(x^{(k)})} \right \|_p <  \mathcal{R} = \frac{R (1 - R)}{1 + (a - 1) R} \, .
\end{equation}
Then it follows from Theorem~\ref{prop:semilocal-theorem-second-kind} that there exists a root-vector
${\eta\in \Kset^n}$ of $f$ such that
\begin{equation} \label{eq:semilocal-Weierstrass-error-estimate-k}
  \|x^{(k)} - \eta\| \preceq \alpha (E_f(x^{(k)})) \, \|W_f(x^{(k)})\| \quad\text{and}\quad
	\left \| \frac{x^{(k)} - \eta}{d(x^{(k)})} \right \|_p < R.
\end{equation}
From the second inequality in \eqref{eq:semilocal-Weierstrass-error-estimate-k} and Corollary~\ref{cor:second-local-Weierstrass}, we conclude that
the Weierstrass-type iteration \eqref{eq:Weierstrass-high-order} converges to $\eta$. By the uniqueness of the limit, we get ${\eta = \xi}$.
Consequently, the error estimate \eqref{eq:semilocal-error-estimate-Weierstrass-2} follows from the first inequality in
\eqref{eq:semilocal-Weierstrass-error-estimate-k}.
This ends the proof.
\end{proof}

%Corollary 3.6
\begin{cor} \label{cor:semilocal-theorem-Weierstrass-second}
Let ${f \in \Kset[z]}$ be a polynomial of degree ${n \ge 2}$, ${N \ge 1}$ and ${1 \le p \le \infty}$. Suppose ${x^{(0)} \in \Kset^n}$ is an initial guess with distinct components such that
\begin{equation}\label{eq:cor-semilocal-theorem-Weierstrass-initial-condition-second}
E_f(x^{(0)}) = \left \| \frac{W_f(x^{(0)})}{d(x^{(0)})} \right \|_p \le \frac{n (\sqrt[n]{2} - 1) (a + 1)}{(a + 2) (2 a + 1)} \,,
\end{equation}
where ${a = (n - 1) ^ {1/q}}$.
Then $f$ has only simple zeros in $\Kset$ and the Weierstrass-type iteration \eqref{eq:Weierstrass-high-order} is well-defined and converges with order ${N + 1}$ to a root-vector of $f$ with error estimate \eqref{eq:semilocal-error-estimate-Weierstrass-2}.
\end{cor}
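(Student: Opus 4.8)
The plan is to deduce Corollary~\ref{cor:semilocal-theorem-Weierstrass-second} directly from Theorem~\ref{thm:semilocal-theorem-Weierstrass}. That theorem already delivers every conclusion we want — simple zeros, well-definedness, convergence of order $N+1$, and the a posteriori estimate \eqref{eq:semilocal-error-estimate-Weierstrass-2} — whenever the initial guess satisfies $E_f(x^{(0)}) < \mathcal{R}$. Hence it suffices to show that the hypothesis \eqref{eq:cor-semilocal-theorem-Weierstrass-initial-condition-second} implies \eqref{eq:cor-semilocal-theorem-Weierstrass-initial-condition}, and for this it is enough to verify the purely numerical inequality
\[
\frac{n(\sqrt[n]{2}-1)(a+1)}{(a+2)(2a+1)} \;<\; \mathcal{R}.
\]

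To carry this out cleanly I would abbreviate $s = n(\sqrt[n]{2}-1)$ and set $R = s/(a+2)$, which is exactly the quantity appearing inside $\mathcal{R}$ in the proof of Theorem~\ref{thm:semilocal-theorem-Weierstrass}; the short computation recorded there rewrites
\[
\mathcal{R} = \frac{R(1-R)}{1+(a-1)R} = \frac{s\,(a+2-s)}{(a+2)\bigl(a+2+(a-1)s\bigr)}.
\]
Since $n \ge 2$ forces $a = (n-1)^{1/q} \ge 1$, every denominator occurring here is strictly positive, so after cancelling the common positive factor $s/(a+2)$ and cross-multiplying, the displayed inequality becomes
\[
(a+1)\bigl(a+2+(a-1)s\bigr) \;<\; (2a+1)(a+2-s).
\]
Expanding both sides and collecting the $s$-terms on the left collapses this, after a one-line computation, to $a(a+2)\,s < a(a+2)$, that is, simply $s < 1$.

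The final step is then to invoke the well-known elementary inequality $n(\sqrt[n]{2}-1) < 1$ for $n \ge 2$ — the very same fact already used at the start of the proof of Theorem~\ref{thm:semilocal-theorem-Weierstrass} — which gives $s < 1$, hence the strict inequality above, hence $E_f(x^{(0)}) < \mathcal{R}$ under \eqref{eq:cor-semilocal-theorem-Weierstrass-initial-condition-second}. Theorem~\ref{thm:semilocal-theorem-Weierstrass} then applies and closes the argument. I do not expect any serious obstacle: the corollary is an explicit, weaker-but-simpler repackaging of Theorem~\ref{thm:semilocal-theorem-Weierstrass}, and the only points requiring a little care are (a) making sure the inequality stays \emph{strict}, since Theorem~\ref{thm:semilocal-theorem-Weierstrass} demands $E_f(x^{(0)}) < \mathcal{R}$ rather than $\le$, and (b) checking that no degenerate case with a vanishing denominator can occur, which is ruled out once one observes $a \ge 1$.
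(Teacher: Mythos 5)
Your proposal is correct and follows exactly the paper's route: the paper's own proof is the one-line remark that the corollary ``follows from Theorem~\ref{thm:semilocal-theorem-Weierstrass} and the inequality $n(\sqrt[n]{2}-1)<1$,'' and your argument simply fills in the algebra showing that the corollary's bound is strictly less than $\mathcal{R}$ precisely because $n(\sqrt[n]{2}-1)<1$. Your attention to the strictness of the inequality and the positivity of the denominators (via $a\ge 1$) is exactly the care the paper leaves implicit.
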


\begin{proof}
It follows from Theorem~\ref{thm:semilocal-theorem-Weierstrass} and the inequality
${n (\sqrt[n]{2} - 1) < 1}$.
\end{proof}

%%%%%%%%%%%%%%%%%%%%%%%%%%%%%%%%%%%%%%%%%%%%%%%%%%
%%
%%              Numerical examples
%%
%%%%%%%%%%%%%%%%%%%%%%%%%%%%%%%%%%%%%%%%%%%%%%%%%%

%Section 4
\section{Numerical examples}
\label{sec:numerical-examples-Weierstrass}

In this section, we provide three numerical examples to show the applicability of Theorem~\ref{thm:semilocal-theorem-Weierstrass-N}.
We consider only the case ${p = \infty}$ since the results in other cases are similar.
Let ${f \in \Cset[z]}$ be a polynomial of degree ${n \ge 2}$ and let $x^{(0)} \in \Cset^n$ be an initial guess.
We consider the function of initial conditions ${E_f \colon \mathcal{D} \to \Rset_+}$  defined by
\begin{equation} \label{eq:FIC3-SM}
E_f(x) =  \left \| \frac{W_f(x)}{d(x)} \right \|_\infty \\ .
\end{equation}
Furthermore, we define the real function $\Omega$ as follows
\begin{equation} \label{eq:Psi}
\Omega(t) = (1 + 2  t \alpha(t))(1 + t  \alpha(t))^{n-1},
\end{equation}
where the function $\alpha$ is defined by
\begin{equation} \label{eq:alpha}
\alpha(t) = 2 / (1 - (n - 2) t + \sqrt{(1 - (n - 2) t)^2 - 4 t}\,).
\end{equation}
It follows from Theorem~\ref{thm:semilocal-theorem-Weierstrass-N} that if there exists an integer ${m \ge 0}$ such that
\begin{equation}  \label{eq:example-initial-conditions}
E_f(x^{(m)}) \le \mu = 1 / (1 + \sqrt{n - 1}\,)^2 \quad\text{and}\quad \Omega(E_f(x^{(m)})) < 2,
\end{equation}
then $f$ has only simple zeros and the Weierstrass-type iteration \eqref{eq:Weierstrass-high-order} starting from $x^{(0)}$ is well-defined and converges to a root-vector $\xi$ of $f$ with order of convergence ${N + 1}$.
Besides, the following a posteriori error estimate holds:
\begin{equation}  \label{eq:example-posteriori-estimate}
\|x^{(k)} - \xi\|_\infty \le \varepsilon_k,  \quad\text{where}\quad
\varepsilon_k = \alpha(E_f(x^{(k)})) \, \|W_f(x^{(k)})\|_\infty
\end{equation}
for all ${k \ge m}$ such that
\begin{equation}  \label{eq:example-a-posteriori-estimate}
 E_f(x^{(k)}) \le \mu \quad\text{and}\quad \Omega(E_f(x^{(k)})) < 2.
\end{equation}
In the examples below, we apply the Weierstrass-type methods \eqref{eq:Weierstrass-high-order} for some ${N \ge 1}$
using the stopping criterion
\begin{equation}  \label{eq:stop-criterion}
\varepsilon_k < 10^{-15} \qquad (k \ge m)
\end{equation}
together with \eqref{eq:example-a-posteriori-estimate}.
For given $N \ge 1$ we calculate the smallest $m \ge 0$ which satisfies the convergence condition \eqref{eq:example-initial-conditions},
the smallest $k \ge m$ for which the stopping criterion \eqref{eq:stop-criterion} is satisfied,
as well as the value of $\varepsilon_k$ for the last $k$. From these data it follows that:
1) $f$ has only simple zeros;
2) the $N$th Weierstrass-type iteration \eqref{eq:Weierstrass-high-order} starting from ${x^{(0)}}$ is well-defined and converges with order ${N + 1}$ to a root-vector of $f$;
3) at $k$th iteration the zeros of $f$ are calculated with an accuracy at least $\varepsilon_k$.

In Table~\ref{tab:HMSZ-example-Weierstrass-numerical-results-N=100} the values of iterations are given to 15 decimal places. The values of other quantities ($\mu$, $E_f(x^{(m)})$, etc.) are given to 6 decimal places.

%%%%%%%%%%%%%%%%%%%%%%%%%%%%%%%%%%%%%%%%%%%%%%%%%%
%%            Example 4.1  exmp:HMSZ
%%%%%%%%%%%%%%%%%%%%%%%%%%%%%%%%%%%%%%%%%%%%%%%%%%

%Example 4.1
\begin{exmp} \label{exmp:HMSZ}
Let us consider the polynomial ${f(z) = z^{3} - z}$ %%%%%, with roots $1$, ${-1}$, and $0$.
and the initial guess %${x^0 = (1.74,1.75,-3.49)}$
\[
{x^{(0)} = (1.74,1.75,-3.49)}
\]
which are taken from Hopkins et al. \cite{HMSZ94} and Niell \cite{Nie01}.
We have ${\mu = 0.171572}$.
For a given $N$, it can be seen from Table~\ref{tab:HMSZ-example} the value of $m$ which guarantees that the Weierstrass-type iteration
\eqref{eq:Weierstrass-high-order} starting from ${x^{(0)}}$ is well-defined and converges to a root-vector of $f$
with order of convergence ${N + 1}$,
the value of $k$ for which the stopping criterion \eqref{eq:stop-criterion} is satisfied and
the value of the error estimate $\varepsilon_k$ which is guaranteed from Theorem~\ref{thm:semilocal-theorem-Weierstrass-N}.
For example, for ${N = 100}$ at  the second iteration ${(m = 2)}$ we have proved that the method is convergent with order of convergence 101 and at the third iteration ${(k = 3)}$ we have calculated the zeros of $f$ with accuracy less than ${10^{-523}}$.
Moreover, at the fourth iteration we have obtained the zeros of $f$ with accuracy less than ${10^{-52900}}$.
The numerical results for ${N = 100}$ are shown in Table~\ref{tab:HMSZ-example-Weierstrass-numerical-results-N=100}.

%%%%%%%%%%%%%%%%%%%%%%%%%%%%%%%%%%%%%%%%%%%%%%%%%%
%%            Table 1
%%%%%%%%%%%%%%%%%%%%%%%%%%%%%%%%%%%%%%%%%%%%%%%%%%

\begin{table}[!ht]
  \centering
  \captionsetup{width=.955\textwidth}
  \caption{Values of $m$, $k$ and $\varepsilon_k$ for Example~\ref{exmp:HMSZ}.}
  \label{tab:HMSZ-example}
  \begin{tabular} {c c c c c|c l l}
\hline\\[-10pt]
$N$ & $m$ & ${E_f(x^{(m)})}$ & $\Omega(E_f(x^{(m)}))$ & $\varepsilon_m$ & $k$ & \multicolumn{1}{c}{$\varepsilon_k$} & \multicolumn{1}{c}{$\varepsilon_{k + 1}$}\\[+1.25pt]
\hline \\[-10pt]
1 & $12$ & $0.029714$ & $1.131702$ & $3.311488 \times 10^{-2}$  & $16$ & $5.496409 \times 10^{-26}$ & $3.000715 \times 10^{-51}$\\
2 & $6$  & $0.007688$ & $1.031545$ & $7.903736 \times 10^{-3}$  & $8$  & $2.463566 \times 10^{-21}$ & $7.688556 \times 10^{-63}$\\
3 & $6$  & $0.000216$ & $1.000867$ & $2.169611 \times 10^{-4}$  & $8$  & $1.692612 \times 10^{-59}$ & $8.138142 \times 10^{-236}$\\
4 & $4$  & $0.007479$ & $1.030664$ & $7.656408 \times 10^{-3}$  & $6$  & $2.712088 \times 10^{-66}$ & $1.252586 \times 10^{-330}$\\
5 & $6$  & $0.000000$ & $1.000000$ & $3.741978 \times 10^{-8}$  & $7$  & $1.837441 \times 10^{-45}$ & $3.058350 \times 10^{-269}$\\
6 & $4$  & $0.000361$ & $1.001445$ & $3.613767 \times 10^{-4}$  & $5$  & $7.021265 \times 10^{-29}$ & $1.900890 \times 10^{-199}$\\
7 & $3$  & $0.016712$ & $1.070710$ & $1.766014 \times 10^{-2}$  & $4$  & $5.881957 \times 10^{-17}$ & $1.306375 \times 10^{-131}$\\
8 & $4$  & $0.000000$ & $1.000000$ & $\,\,6.811047 \times 10^{-11}$
                                                                & $5$  & $1.439954 \times 10^{-95}$ & $1.144468 \times 10^{-857}$\\
9 & $3$  & $0.013852$ & $1.058033$ & $1.387643 \times 10^{-2}$  & $4$  & $2.122314 \times 10^{-19}$ & $1.503595 \times 10^{-187}$\\
10 & $4$ & $0.002015$ & $1.008114$ & $2.019382 \times 10^{-3}$  & $5$  & $1.020330 \times 10^{-36}$ & $2.321516 \times 10^{-402}$\\
100 &$2$ & $0.000006$ & $1.000026$ & $6.628377 \times 10^{-6}$  & $3$  & $2.609028 \times 10^{-524}$& $3.867338 \times 10^{-52901}$\\[+1.25pt]
\hline
\end{tabular}
\end{table}
\FloatBarrier

%%%%%%%%%%%%%%%%%%%%%%%%%%%%%%%%%%%%%%%%%%%%%%%%%%
%%            Table 2
%%%%%%%%%%%%%%%%%%%%%%%%%%%%%%%%%%%%%%%%%%%%%%%%%%

\begin{table}[!ht]
  \centering
  \captionsetup{width=.74\textwidth}
  \caption{Numerical results for Example~\ref{exmp:HMSZ} in the case $N = 100$.}
  \label{tab:HMSZ-example-Weierstrass-numerical-results-N=100}
  \begin{tabular} {l l l l}
\hline\\[-10pt]
$iteration$ & \multicolumn{1}{c}{$x_1^{(k)}$}  & \multicolumn{1}{c}{$x_2^{(k)}$} & \multicolumn{1}{c}{$x_3^{(k)}$}\\[+2.5pt]
\hline\\[-10pt]
$k = 0$ & $1.74             $ & $\;\; 1.75            $ & $-3.49             $\\
$k = 1$ & $1.149415748340902$ & $\;\; 1.975676419092484$ & $-2.359878141616537$ \\
$k = 2$ & $0.999998661360835$ & $   - 0.000006628312624$ & $-1.000004865683659$ \\
$k = 3$ & $1.000000000000000$ & $\;\; 0.000000000000000$ & $-1.000000000000000$ \\[+1.25pt]
\hline
\end{tabular}
\end{table}
\FloatBarrier

\end{exmp}

%%%%%%%%%%%%%%%%%%%%%%%%%%%%%%%%%%%%%%%%%%%%%%%%%%
%%            Example 4.2  exmp:SP
%%%%%%%%%%%%%%%%%%%%%%%%%%%%%%%%%%%%%%%%%%%%%%%%%%

%Example 4.2
\begin{exmp} \label{exmp:SP}
Consider the polynomial
\(
f(z) = z^{7} -  z^{5} - 10 z^{4} - z^{3} - z + 10
\)
and the initial guess
\[
x^{(0)} = (2.3 + 0.1 i, 1.2 + 0.2 i, -0.8 - 0.2 i, 0.1  + 1.3 i, - 0.2 - 0.8 i, -1.2 + 2.2 i, -1.2 - 1.8 i)
\]
which are taken from Sakurai and Petkovi\'c \cite{SP96}.
In this case ${\mu = 0.084040}$.
The results for this example are presented in Table~\ref{tab:SP-example}.
For example, we can see that for ${N = 4}$ at the first iteration ($m = 1$) we prove the convergence of the method, and at the third iteration ($k = 3$) we obtain the zeros of $f$ with accuracy less than ${10^{-58}}$.

%%%%%%%%%%%%%%%%%%%%%%%%%%%%%%%%%%%%%%%%%%%%%%%%%%
%%            Table 3
%%%%%%%%%%%%%%%%%%%%%%%%%%%%%%%%%%%%%%%%%%%%%%%%%%

\begin{table}[!ht]
  \centering
  \captionsetup{width=.92\textwidth}
  \caption{Values of $m$, $k$ and $\varepsilon_k$ for Example~\ref{exmp:SP}.}
  \label{tab:SP-example}
 \begin{tabular} {c c c c c|c l l}
\hline\\[-10pt]
$N$ & $m$ & ${E_f(x^{(m)})}$ & $\Omega(E_f(x^{(m)}))$ & $\varepsilon_m$ & $k$ & \multicolumn{1}{c}{$\varepsilon_k$} & \multicolumn{1}{c}{$\varepsilon_{k + 1}$}\\[+1.25pt]
\hline \\[-10pt]
1 & $2$ & $0.007526$ & $1.064790$ & $1.116392 \times 10^{-2}$ & $5$ & $1.796060 \times 10^{-17}$ & $2.792108 \times 10^{-34}$\\
2 & $1$ & $0.035532$ & $1.427605$ & $6.352229 \times 10^{-2}$ & $4$ & $1.209144 \times 10^{-39}$ & $7.010810 \times 10^{-118}$\\
3 & $1$ & $0.013767$ & $1.126494$ & $2.129981 \times 10^{-2}$ & $3$ & $2.368469 \times 10^{-31}$ & $4.291912 \times 10^{-123}$\\
4 & $1$ & $0.004823$ & $1.040419$ & $6.681020 \times 10^{-3}$ & $3$ & $1.000227 \times 10^{-59}$ & $8.418384 \times 10^{-297}$\\
5 & $1$ & $0.001903$ & $1.015502$ & $2.840694 \times 10^{-3}$ & $2$ & $2.619223 \times 10^{-17}$ & $7.631970 \times 10^{-101}$\\
6 & $1$ & $0.000695$ & $1.005604$ & $9.366066 \times 10^{-4}$ & $2$ & $1.157166 \times 10^{-22}$ & $9.947018 \times 10^{-156}$\\
7 & $1$ & $0.000253$ & $1.002031$ & $3.750097 \times 10^{-4}$ & $2$ & $1.968419 \times 10^{-29}$ & $1.470015 \times 10^{-231}$\\
8 & $1$ & $0.000107$ & $1.000862$ & $1.444295 \times 10^{-4}$ & $2$ & $3.245945 \times 10^{-36}$ & $1.808200 \times 10^{-322}$\\
9 & $1$ & $0.000038$ & $1.000306$ & $5.655465 \times 10^{-5}$ & $2$ & $9.224622 \times 10^{-45}$ & $3.800354 \times 10^{-443}$\\
10 & $1$& $0.000015$ & $1.000124$ & $2.091765 \times 10^{-5}$ & $2$ & $1.833150 \times 10^{-53}$ & $1.621635 \times 10^{-583}$\\
100& $1$& $0.000000$ & $1.000000$ & $\,\,1.325425 \times 10^{-40}$
                                                              &$1$ & $1.325425 \times 10^{-40}$ & $1.089487 \times 10^{-4036}$\\[+1.25pt]
\hline
\end{tabular}
\end{table}
\FloatBarrier

\end{exmp}

%%%%%%%%%%%%%%%%%%%%%%%%%%%%%%%%%%%%%%%%%%%%%%%%%%
%%          Example 4.3  exmp:Wang-Zhao
%%%%%%%%%%%%%%%%%%%%%%%%%%%%%%%%%%%%%%%%%%%%%%%%%%

%Example 4.3
\begin{exmp} \label{exmp:Wang-Zhao}
In 1997, Wang and Zhao \cite{WZ97} studied the convergence behavior of the Weierstrass method
\eqref{eq:Weierstrass-iteration} for the polynomials
\begin{equation} \label{eq:Wang-Zhao-polynomials}
f(z)=z^{20}-1 \quad\text{and}\quad f(z)=z^{30}-1 .
\end{equation}
In their paper they say: ``\emph{... when we intended to solve the equation ${z^{20}-1 = 0}$, we could not find any suitable initial values to use algorithm} \eqref{eq:Weierstrass-iteration}''. Actually, this is not true.
There are many initial approximations for which the Weierstrass method \eqref{eq:Weierstrass-iteration} is convergent for the polynomials
\eqref{eq:Wang-Zhao-polynomials}.
We have made two types of experiments.

First, we compute the zeros of the polynomials \eqref{eq:Wang-Zhao-polynomials} by using the Weierstrass algorithm
\eqref{eq:Weierstrass-iteration} with 1000 initial guesses ${x^{(0)} \in \Cset^n}$ such that ${\|x^{(0)}\|_\infty} \le 2$ given randomly.
By Theorem~\ref{thm:semilocal-theorem-Weierstrass-N} (${N = 1}$), we obtain that the Weierstrass method starting from each of these random initial approximations ${x^{(0)}}$ is well-defined and convergent to a root-vector of $f$.

Second, we compute the zeros of the polynomials \eqref{eq:Wang-Zhao-polynomials} by using the Weierstrass-type iterations
\eqref{eq:Weierstrass-high-order} for some
${N \ge 1}$ (including the case ${N = 1}$) with Aberth's initial approximation ${x^{(0)} \in \Cset^n}$ given by (see \cite{Abe73})
\begin{equation} \label{eq:Abert-initial-guess-R-Cauchy}
x_\nu^{(0)} = r_0 \exp{(i \theta_\nu)}, \quad \theta_\nu = \frac{\pi}{n} \left(2\nu - \frac{3}{2}\right), \quad \nu = 1,\ldots,n,
\end{equation}
where $n$ is the degree of the corresponding polynomial. We have made these experiments for ${r_0 = 1, 1.1, 1.2,\ldots,1.9,2}$.
Again, in all cases, we prove the convergence of the methods.
In this example, we present the results for the second type of experiments for ${r_0 = 2}$.
For the polynomial ${f(z) = z^{20} - 1}$ we have ${\mu = 0.034821}$ and the obtained results can be seen in
Table~\ref{tab:Wang-Zhao-example-Abert-power-20}.
For example, we can see that for ${N = 61}$ at the seven iteration we have calculated the zeros of $f$
with accuracy less than ${10^{-14153}}$.

%%%%%%%%%%%%%%%%%%%%%%%%%%%%%%%%%%%%%%%%%%%%%%%%%%
%%            Table 4
%%%%%%%%%%%%%%%%%%%%%%%%%%%%%%%%%%%%%%%%%%%%%%%%%%

\begin{table}[!ht]
  \centering
   \captionsetup{width=.95\textwidth}
  \caption{Values of $m$, $k$ and $\varepsilon_k$ for Example~\ref{exmp:Wang-Zhao} for ${f(z) = z^{20} - 1}$.}
  \label{tab:Wang-Zhao-example-Abert-power-20}
 \begin{tabular} {c c c c c|c l l}
\hline\\[-10pt]
$N$ & $m$ & ${E_f(x^{(m)})}$ & $\Omega(E_f(x^{(m)}))$ & $\varepsilon_m$ & $k$ & \multicolumn{1}{c}{$\varepsilon_k$} & \multicolumn{1}{c}{$\varepsilon_{k + 1}$}\\[+1.25pt]
\hline \\[-10pt]
1 & $16$ & $0.005454$ & $1.135937$ & $1.906753 \times 10^{-3}$ & $19$ & $5.251672 \times 10^{-16}$ & $2.620105 \times 10^{-30}$\\
2 & $10$ & $0.008641$ & $1.241514$ & $3.249990 \times 10^{-3}$ & $12$ & $6.054274 \times 10^{-16}$ & $2.002780 \times 10^{-44}$\\
3 & $8$  & $0.006432$ & $1.165842$ & $2.298445 \times 10^{-3}$ & $10$ & $3.924632 \times 10^{-29}$ & $2.034074 \times 10^{-111}$\\
4 & $7$  & $0.003429$ & $1.079931$ & $1.147442 \times 10^{-3}$ & $9$  & $1.568679 \times 10^{-51}$ & $7.736874 \times 10^{-251}$\\
5 & $7$  & $0.000000$ & $1.000000$ & $1.310563 \times 10^{-8}$ & $8$  & $3.920705 \times 10^{-43}$ & $2.810626 \times 10^{-250}$\\
6 & $6$  & $0.000465$ & $1.009907$ & $1.469386 \times 10^{-4}$ & $7$  & $1.026738 \times 10^{-21}$ & $8.842207 \times 10^{-142}$\\
7 & $6$  & $0.000000$ & $1.000006$ & $9.113539 \times 10^{-8}$ & $7$  & $3.323098 \times 10^{-50}$ & $1.038511 \times 10^{-389}$\\
8 & $5$  & $0.014073$ & $1.494951$ & $6.079699 \times 10^{-3}$ & $7$  & $2.518063 \times 10^{-112}$& $2.700157 \times 10^{-997}$\\
9 & $5$  & $0.001649$ & $1.036367$ & $5.324415 \times 10^{-4}$ & $6$  & $8.150179 \times 10^{-25}$ & $8.150497 \times 10^{-233}$\\
10& $5$  & $0.000075$ & $1.001583$ & $2.357206 \times 10^{-5}$ & $6$  & $7.347516 \times 10^{-42}$ & $2.017354 \times 10^{-443}$\\
61& $5$  & $0.000069$ & $1.001472$ & $2.192754 \times 10^{-5}$ & $6$  & $5.604020 \times 10^{-230}$& $1.117175 \times 10^{-14154}$\\
100& $3$ & $0.000000$ & $1.000000$ & $\,\,4.366726 \times 10^{-17}$
                                                               & $3$  & $4.366726 \times 10^{-17}$ & $2.679890 \times 10^{-1555}$\\
101& $3$ & $0.000000$ & $1.000000$ & $\,\,1.612383 \times 10^{-17}$
                                                               & $3$  & $1.612383 \times 10^{-17}$ & $8.163089 \times 10^{-1615}$\\[+1.25pt]
\hline
\end{tabular}
\end{table}
\FloatBarrier

In the Figure~\ref{fig:Wang-Zhao-example}, we present the trajectories of approximations generated by the methods
\eqref{eq:Weierstrass-high-order} for ${N = 1}$ after $19$ iterations and ${N = 61}$ after $6$ iterations.
The trajectories of approximations generated by other Weierstrass-type methods \eqref{eq:Weierstrass-high-order} are similar either to ${N = 1}$ or to ${N = 61}$.

%%%%%%%%%%%%%%%%%%%%%%%%%%%%%%%%%%%%%%%%%%%%%%%%%%
%%           Figure 2
%%%%%%%%%%%%%%%%%%%%%%%%%%%%%%%%%%%%%%%%%%%%%%%%%%

\begin{figure}[!ht]
\centering
%%\framebox{
\begin{minipage}[b]{0.45\linewidth}
\includegraphics[scale=0.65]{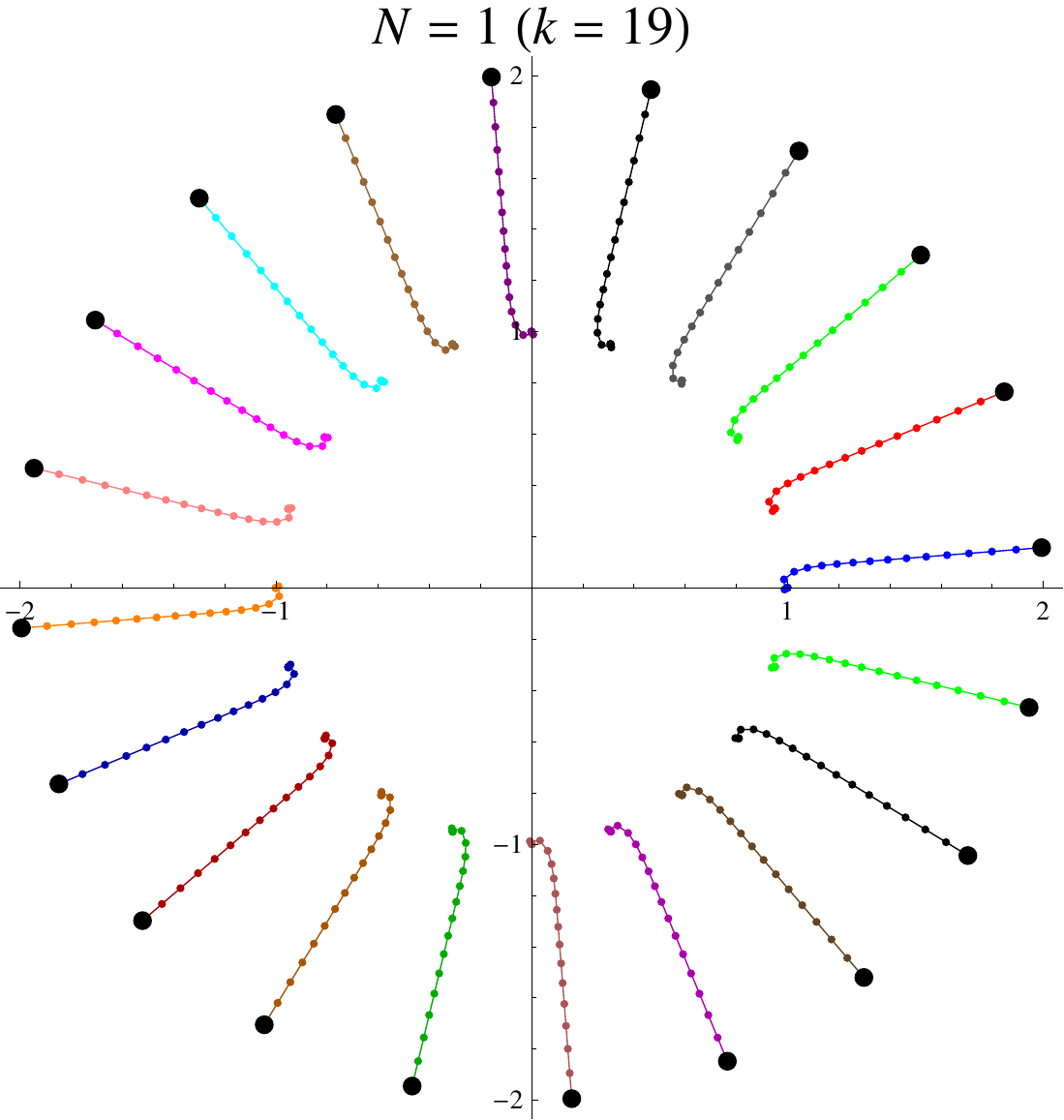}
\end{minipage}
\quad
\begin{minipage}[b]{0.45\linewidth}
\includegraphics[scale=0.65]{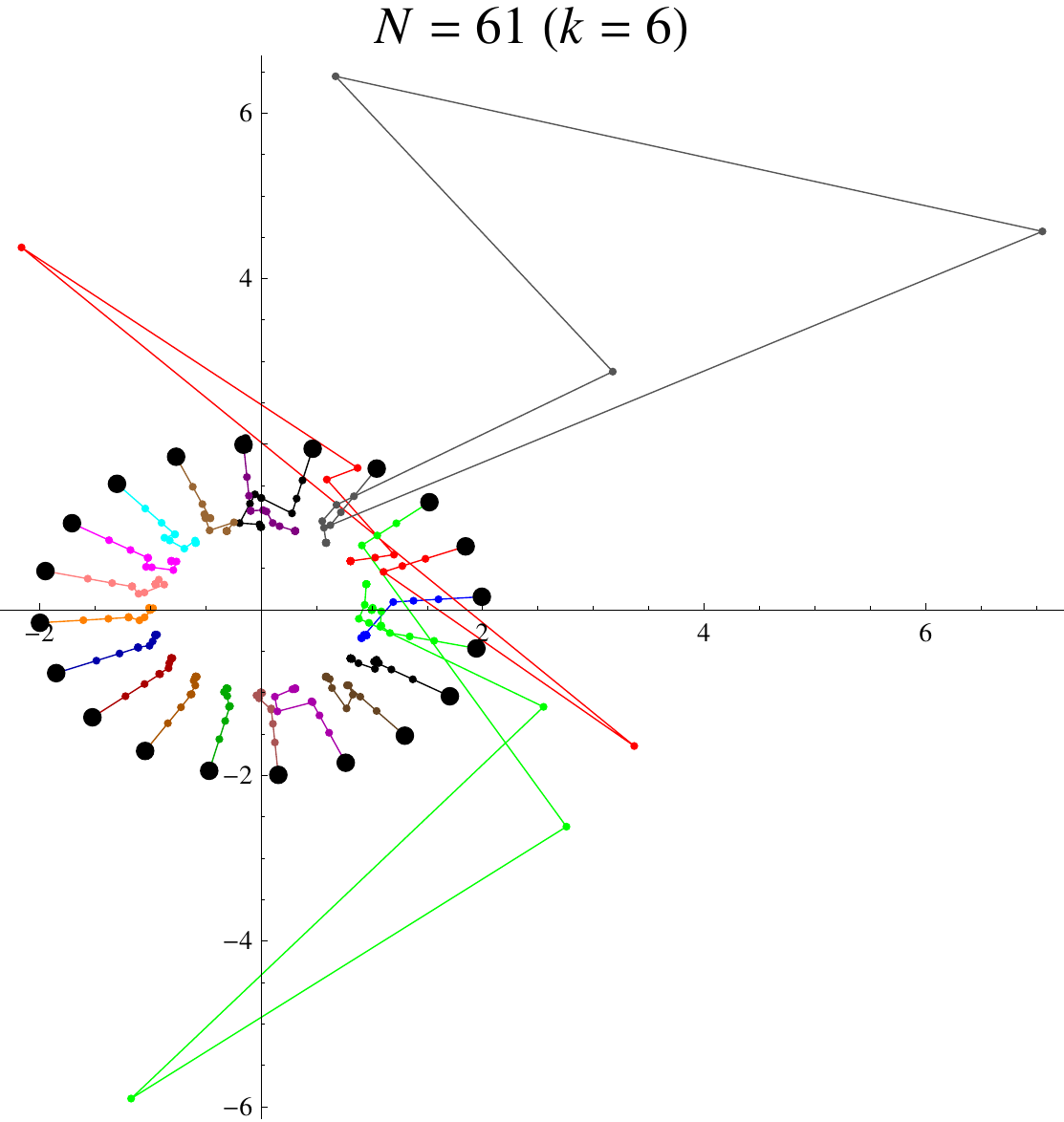}

\end{minipage}%}
\caption{Trajectories of approximations for the polynomial ${f(z) = z^{20} - 1}$.}
\label{fig:Wang-Zhao-example}
\end{figure}
\FloatBarrier

The situation is similar for the polynomial ${f(z) = z^{30} - 1}$. In this case ${\mu = 0.024527}$ and the obtained numerical results are presented in Table~\ref{tab:Wang-Zhao-example-Abert-power-30}.
For example, we can see that for ${N = 101}$ at the sixth iteration we have obtained the zeros of $f$
with accuracy less than ${10^{-95 810}}$.

%%%%%%%%%%%%%%%%%%%%%%%%%%%%%%%%%%%%%%%%%%%%%%%%%%
%%            Table 5
%%%%%%%%%%%%%%%%%%%%%%%%%%%%%%%%%%%%%%%%%%%%%%%%%%

\begin{table}[!ht]
  \centering
   \captionsetup{width=.95\textwidth}
  \caption{Values of $m$, $k$ and $\varepsilon_k$ for Example~\ref{exmp:Wang-Zhao} for ${f(z) = z^{30} - 1}$.}
  \label{tab:Wang-Zhao-example-Abert-power-30}
 \begin{tabular} {c c c c c|c l l}
\hline\\[-10pt]
$N$ & $m$ & ${E_f(x^{(m)})}$ & $\Omega(E_f(x^{(m)}))$ & $\varepsilon_m$ & $k$ & \multicolumn{1}{c}{$\varepsilon_k$} & \multicolumn{1}{c}{$\varepsilon_{k + 1}$}\\[+1.25pt]
\hline \\[-10pt]
1 & $23$ & $0.004903$ & $1.193434$ & $1.196341 \times 10^{-3}$ & $26$ & $1.664050 \times 10^{-16}$ & $4.015143 \times 10^{-31}$\\
2 & $15$ & $0.000303$ & $1.009546$ & $6.408814 \times 10^{-5}$ & $17$ & $3.307885 \times 10^{-29}$ & $7.610048 \times 10^{-84}$\\
3 & $12$ & $0.000132$ & $1.004131$ & $2.780420 \times 10^{-5}$ & $14$ & $3.153464 \times 10^{-56}$ & $3.014782 \times 10^{-219}$\\
4 & $10$ & $0.003933$ & $1.147300$ & $9.286229 \times 10^{-4}$ & $12$ & $5.264378 \times 10^{-50}$ & $1.787341 \times 10^{-242}$\\
5 & $9$  & $0.003966$ & $1.148751$ & $9.371733 \times 10^{-4}$ & $11$ & $4.726532 \times 10^{-71}$ & $7.146541 \times 10^{-417}$\\
6 & $9$  & $0.000000$ & $1.000000$ & $3.581766 \times 10^{-9}$ & $10$ & $7.028904 \times 10^{-53}$ & $7.878100 \times 10^{-359}$\\
7 & $8$  & $0.000337$ & $1.010607$ & $7.117318 \times 10^{-5}$ & $9$  & $8.244856 \times 10^{-26}$ & $2.877668 \times 10^{-193}$\\
8 & $8$  & $0.000000$ & $1.000000$ & $4.511178 \times 10^{-9}$ & $9$  & $1.511999 \times 10^{-66}$ & $8.070636 \times 10^{-584}$\\
9 & $7$  & $0.006793$ & $1.299041$ & $1.773403 \times 10^{-3}$ & $8$  & $1.009302 \times 10^{-18}$ & $3.108310 \times 10^{-170}$\\
10 &$7$  & $0.000195$ & $1.006113$ & $4.110752 \times 10^{-5}$ & $8$  & $2.188233 \times 10^{-37}$ & $2.263137 \times 10^{-392}$\\
101& $4$ & $0.000000$ & $1.000000$ & $\,\,4.263459 \times 10^{-11}$
                                                               & $5$  & $3.419093 \times 10^{-941}$ & $5.715983 \times 10^{-95811}$\\[+1.25pt]
\hline\\
\end{tabular}
\end{table}
\FloatBarrier

\end{exmp}

%Remark 4.4
\begin{rem}
Let us note that the numerical results of Examples \ref{exmp:HMSZ}, \ref{exmp:SP} and \ref{exmp:Wang-Zhao}
can also be obtained by each of
Theorem~\ref{thm:semilocal-theorem-Weierstrass},
Corollary~\ref{cor:semilocal-theorem-Weierstrass-first} and
Corollary~\ref{cor:semilocal-theorem-Weierstrass-second}.
\end{rem}

\section*{Acknowledgements}
This research is supported by Project NI15-FMI-004 of Plovdiv University.

%%%%%%%%%%%%%%%%%%%%%%%%%%%%%%%%%%%%%%%%%%%%%%%%%%
%%
%%              References
%%
%%%%%%%%%%%%%%%%%%%%%%%%%%%%%%%%%%%%%%%%%%%%%%%%%%

\end{document}